\newtheorem{theorem}{Theorem}[section]
\newtheorem{lemma}[theorem]{Lemma}
\newtheorem{proposition}[theorem]{Proposition}
\newtheorem{corollary}[theorem]{Corollary}
\theoremstyle{definition} 
\newtheorem{definition}[theorem]{Definition}
\theoremstyle{remark} 
\numberwithin{equation}{section}
\newcommand{\Z}{\mathbb{Z}}
\newcommand{\OO}{\mathcal{O}} 
\newcommand{\F}{\mathfrak{F}}
\newcommand{\PP}{\mathbb{P}} 
\newcommand{\C}{\mathbb{C}}
\newcommand{\R}{\mathbb{R}} 
\newcommand{\T}{\mathbb{T}}
\newcommand{\A}{\mathbb{A}} 
\newcommand{\wA}{\widehat{\A}} 
\newcommand{\mA}{\mathcal{A}}
\newcommand{\mT}{\mathcal{T}}
\newcommand{\Iso}{\mathrm{Iso}}
\newcommand{\PC}{\PP^n(\C)}
\newcommand{\SU}{\mathrm{SU}}
\newcommand{\PSU}{\mathrm{PSU}}
\begin{document}

\title[Commutative Toeplitz operators on $\PC$]{Commutative
  $C^*$-algebras of Toeplitz operators on complex projective spaces}

\author[R.~Quiroga-Barranco]{Raul Quiroga-Barranco} 
\address{%
 Centro de Investigaci\'on en Matem\'aticas \\
 Apartado Postal 402 \\
 36000, Guanajuato, Gto. \\
 M\'exico.  
} 
\email{quiroga@cimat.mx} 

\thanks{The first named author was partially supported by SNI-Mexico
  and by the Conacyt grant no. 82979. The second named author was
  partially supported by a Conacyt postdoctoral fellowship. }

\author[A.~Sanchez-Nungaray]{Armando Sanchez-Nungaray}
\address{%
Centro de Investigaci\'on en Matem\'aticas \\
Apartado Postal 402 \\
36000, Guanajuato, Gto. \\
M\'exico.
}
\email{armandos@cimat.mx}

\subjclass{Primary 47B35; Secondary 32A36, 32M15, 53C12}

\keywords{Toeplitz operator, Bergman space, complex projective space,
  commutative $C^*$-algebra, Lagrangian foliation, Abelian group}

\begin{abstract}
  We prove the existence of commutative $C^*$-algebras of Toeplitz
  operators on every weighted Bergman space over the complex
  projective space $\PC$. The symbols that define our algebras are
  those that depend only on the radial part of the homogeneous
  coordinates. The algebras presented have an associated pair of
  Lagrangian foliations with distinguished geometric properties and
  are closely related to the geometry of $\PC$.
\end{abstract}

\maketitle

\section{Introduction}
The existence of nontrivial commutative $C^*$-algebras of Toeplitz
operators on bounded domains has shown to be a remarkably interesting
phenomenon (see \cite{GQV-disk}, \cite{QV-Ball1}, \cite{QV-Ball2},
\cite{QV-Reinhardt}, \cite{Nikolai-book}). Large families of symbols
defining such commutative algebras on every weighted Bergman space
have been proved to exist on the unit ball and Reinhardt domains.
Also, in \cite{Ernesto} it was proved the existence of commutative
$C^*$-algebras of Toeplitz operators for the sphere. All of the
examples exhibited in these references come equiped with a
distinguished geometry and can be described in terms of the isometry
group of the domain.

In this work we study the complex projective space $\PC$ and its
hyperplane line bundle $H$. It is well known that these objects
provide the usual setup to consider quantization through the use of
weighted Bergman spaces (see \cite{Schli}). Note that, due to the
compactness of $\PC$, the weight of the Bergman spaces has a discrete
set of values. Within this setup for $\PC$, we prove the existence of
commutative $C^*$-algebras of Toeplitz operators on all weighted
Bergman spaces. As in the case of previous works, our algebras can be
described in terms of the symmetries of $\PC$. To elaborate on this,
recall that for every hyperplane $P \subset \PC$, the open set $\PC
\setminus P$ can be canonically identified with the complex plane
$\C^n$. With respect to such identification, we prove that the symbols
that depend only on the radial components of the coordinates of $\C^n$
define commutative $C^*$-algebras of Toeplitz operators on every
weighted Bergman space over $\PC$ (see
Theorem~\ref{theo:comm-C*-algebra}); this sort of symbols are called
separately radial. Furthermore, we construct unitary equivalences of
the weighted Bergman spaces with suitable (finite dimensional) Hilbert
spaces such that the Toeplitz operators with separately radial symbols
are simultaneously turned into multiplication operators.

We also prove that our commutative $C^*$-algebras have a distinguished
geometry that describes them. Recall that the connected component of
the group of isometries of $\PC$ is given by the group $\SU(n+1)$ of
$(n+1)\times(n+1)$ unitary matrices with determinant $1$. It is proved
that the separately radial symbols are those invariant under the
action of the diagonal matrices of $\SU(n+1)$ (see
Lemma~\ref{lem:radialgroup}); in particular, this provides a
description of the separately radial symbols that depends on $\PC$
only, and not on a set of coordinates. Also, it allows us to show that
every commutative $C^*$-algebra obtained from separately radial
symbols has an associated pair of Lagrangian foliations with special
geometric properties (see
Theorem~\ref{theo:lagrframes-radialsymbols}). Such properties come
from the group $\A(n)$ of isometries of $\PC$ defined by the diagonal
matrices in $\SU(n+1)$. It turns out that $\A(n)$ is a maximal Abelian
subgroup of the group of isometries of $\PC$ (see
Section~\ref{sec:abel-lagr}). We prove that, up to an isometry,
$\A(n)$ is the only such maximal Abelian subgroup. This behavior is in
contrast with the existence of $n+2$ maximal Abelian subgroups of the
group of isometries of $\mathbb{B}^n$, which provides $n+2$
nonequivalent commutative $C^*$-algebras of Toeplitz operators (see
\cite{QV-Ball1} and \cite{QV-Ball2}). This is why for the projective
space $\PC$ there exist, up to an isometry, only one commutative
$C^*$-algebra of Toeplitz operators that has an associated pair of
foliations as described in
Theorem~\ref{theo:lagrframes-radialsymbols}. 

As for the contents of this work, in Section~\ref{sec:geom-PC} we
present some differential geometric preliminaries on
$\PC$. Section~\ref{sec:quantum} recalls the basic material on
quantization on compact K\"ahler
manifolds. Section~\ref{sec:quantum-PC} recollects the known results
specific to the quantization for complex projective spaces including
weighted Bergman spaces. Sections~\ref{sec:radial} and
\ref{sec:abel-lagr} contain our main results and proofs on the
existence of commutative $C^*$-algebras and their geometric
description; our techniques are closely related to those found in
\cite{QV-Ball1} and \cite{QV-Ball2}.

\section{Geometric preliminaries on $\PC$}\label{sec:geom-PC}
Recall that $\PC$ is the complex $n$-dimensional manifold that
consists of the elements $[w] = \C w\setminus\{0\}$, where $w \in
\C^{n+1}\setminus \{0\}$. For every $j = 0, \dots, n$, we have an open
set
$$
U_j = \{ [w] \in \PC : w_j \not= 0 \}
$$
and a biholomorphism $\varphi_j : U_j \rightarrow \C^n$ given by
$$
\varphi_j([w]) = \frac{1}{w_j} (w_0, \dots, \widehat{w}_j, \dots, w_n)
= (z_1, \dots, z_n),
$$
where the numbers $z_k$ are known as the homogeneous coordinates with
respect to the map $\varphi_j$. The collection of all such maps yields
the holomorphic atlas of $\PC$ and provides (equivalent) realizations
of $\C^n$ as an open dense conull subset of $\PC$. 

We refer to Example~6.3 of \cite{KNII} for the details on the
following construction of the Fubini-Study metric on $\PC$.

For every $j = 0, \dots, n$ consider the function $f_j : U_j
\rightarrow \C$ given by
\begin{equation}\label{potencial}
f_j([w]) = 
\sum_{k=0}^n \frac{w_k\overline{w}_k}{w_j\overline{w}_j} =
1 + \sum_{k=1}^n z_k \overline{z}_k, 
\end{equation}
for the above homogeneous coordinates $z_k$ with respect to
$\varphi_j$. Then, it is easily seen that
$$
\partial \overline{\partial} \log f_j = \partial \overline{\partial} \log f_k
$$
on $U_j \cap U_k$ for every $j,k= 0, \dots, n$. In particular, there
is a well defined closed $(1,1)$-form $\omega$ on $\PC$ given by
\begin{equation}\label{simplectic-form}
\omega = i \partial \overline{\partial} \log f_j,
\end{equation}
on $U_j$. This yields the canonical K\"ahler structure on $\PC$ for
which it is the Hermitian symmetric space with constant positive
holomorphic sectional curvature. The corresponding Riemannian metric
is known as the Fubini-Study metric. We refer to \cite{KNII} for these
and the rest of the remarks in this section on the geometry of $\PC$
induced by $\omega$.

With respect to the chart $\varphi_0$, we have the following induced
K\"ahler form on $\C^n$
$$
\omega_0 = (\varphi_0^{-1})^*(\omega) = 
i\frac{(1+|z|^2) \sum_{k=1}^n dz_k\wedge d\overline{z}_k -
  \sum_{k,l=1}^n \overline{z}_k z_l dz_k\wedge
  d\overline{z}_l}{(1+|z|^2)^2}.
$$
The volume element on $\PC$ with respect to the Fubini-Study metric
is defined by
$$
\Omega = \frac{1}{(2\pi)^n}\omega^n.
$$
The following result is a consequence of the definition of the volume
element of a Riemannian metric and its properties for Hermitian
symmetric spaces (see \cite{Helgason}).

\begin{lemma}
  The volume element on $\C^n$ induced by the Fubini-Study metric of
  $\PC$ is given by
  $$
  \Omega = \frac{1}{(2\pi)^n}\omega_0^n = \frac{1}{\pi^n}
  \frac{dV(z)}{(1 + |z_1|^2 + \dots + |z_n|^2)^{n+1}}
  $$
  where $dV(z) = dx_1\wedge dy_1 \wedge \dots \wedge dx_n\wedge dy_n$
  is the Lebesgue measure on $\C^n$.
\end{lemma}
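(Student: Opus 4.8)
The plan is to reduce the top exterior power $\omega_0^n$ to the determinant of the Hermitian matrix attached to $\omega_0$, and then to evaluate that determinant by exploiting the rank-one structure of its coefficient matrix. First I would rewrite the given expression in the standard form $\omega_0 = i\sum_{k,l=1}^n g_{k\bar l}\, dz_k\wedge d\bar z_l$, reading off
$$
g_{k\bar l} = \frac{(1+|z|^2)\delta_{kl} - \bar z_k z_l}{(1+|z|^2)^2},
$$
which one recognizes as the complex Hessian $\partial_k\partial_{\bar l}\log(1+|z|^2)$ of the local K\"ahler potential appearing in \eqref{potencial}.

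Next I would invoke the standard passage from such a matrix to the Riemannian volume form. Using $i\, dz_k\wedge d\bar z_k = 2\, dx_k\wedge dy_k$ and expanding the $n$-fold product, one gets $\omega_0^n = c_n \det(g_{k\bar l})\, dV(z)$ for an explicit numerical constant $c_n$ assembled from the powers of $i$ and $2$ and the combinatorial factor produced by the wedge product. The cleanest way to see this is to diagonalize the positive Hermitian form $(g_{k\bar l})$ at each point by a unitary change of coordinates, which leaves both $dV(z)$ and $\det(g_{k\bar l})$ invariant and reduces the computation to a product of one-dimensional factors. This part is routine; the only care needed is the bookkeeping of $c_n$ so that, together with the normalization $(2\pi)^{-n}$, it reproduces the constant in the statement.

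The substantive step is the determinant. Writing $r^2 = |z|^2$, the matrix in the numerator of $(g_{k\bar l})$ is $(1+r^2) I - A$ with $A = (\bar z_k z_l)$. Now $A$ has rank one, so its only possibly nonzero eigenvalue is its trace $\sum_{k} \bar z_k z_k = r^2$, the eigenvalue $0$ occurring with multiplicity $n-1$; hence the eigenvalues of $(1+r^2)I - A$ are $1$ once and $1+r^2$ with multiplicity $n-1$, giving $\det((1+r^2)I - A) = (1+r^2)^{n-1}$. Equivalently this is the matrix determinant lemma $\det(cI+uv^{\!\top}) = c^{n-1}(c+v^{\!\top}u)$ with $c=1+r^2$, $u=-\bar z$, $v=z$. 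Dividing by the factor $(1+r^2)^{2n}$ coming from the denominators of $g_{k\bar l}$ yields
$$
\det(g_{k\bar l}) = \frac{(1+|z|^2)^{n-1}}{(1+|z|^2)^{2n}} = \frac{1}{(1+|z|^2)^{n+1}}.
$$

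Substituting this determinant into $\frac{1}{(2\pi)^n}\omega_0^n = \frac{c_n}{(2\pi)^n}\det(g_{k\bar l})\,dV(z)$ and reducing the constant to the form in the statement then gives the claimed expression. I expect the only real obstacle to be the careful tracking of the normalizing constants in the top wedge power, since the geometric heart of the argument---the rank-one determinant---is elementary. An essentially equivalent alternative is to evaluate $\Omega$ at the single point $z=0$, where $\omega_0 = i\sum_k dz_k\wedge d\bar z_k$ and the identity is immediate, and then to propagate it using the transitivity of the $\SU(n+1)$-action by isometries; the direct determinant computation is shorter.
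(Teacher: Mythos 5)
Your route is genuinely different from the paper's: the paper offers no computation at all for this lemma, merely invoking the definition of the Riemannian volume element and citing Helgason, whereas you give a self-contained chart computation. The substantive part of your argument is correct: writing $\omega_0 = i\sum_{k,l}g_{k\bar l}\,dz_k\wedge d\bar z_l$ and using the rank-one structure (matrix determinant lemma) indeed gives
$$
\det\bigl(g_{k\bar l}\bigr)=\frac{\det\bigl((1+|z|^2)I-(\bar z_k z_l)\bigr)}{(1+|z|^2)^{2n}}
=\frac{(1+|z|^2)^{n-1}}{(1+|z|^2)^{2n}}=\frac{1}{(1+|z|^2)^{n+1}},
$$
and this is the geometric heart of the matter.

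The gap is located exactly in the step you deferred, and it cannot be repaired in the way you promise. The constant is $c_n = 2^n\,n!$: expanding the top power of a $(1,1)$-form gives $\omega_0^n = i^n\, n!\det(g_{k\bar l})\,dz_1\wedge d\bar z_1\wedge\dots\wedge dz_n\wedge d\bar z_n = 2^n n!\det(g_{k\bar l})\,dV(z)$ (the factor $n!$ is the combinatorial factor, and it also appears in your diagonalization argument, since $(\sum_k \lambda_k\,2\,dx_k\wedge dy_k)^n = 2^n n!\prod_k\lambda_k\,dV$). Hence an honest bookkeeping yields
$$
\frac{1}{(2\pi)^n}\,\omega_0^n=\frac{n!}{\pi^n}\,\frac{dV(z)}{(1+|z|^2)^{n+1}},
$$
which is $n!$ times the right-hand side of the statement; the displayed identity is therefore false as written for $n\geq 2$, and no choice of bookkeeping can make $c_n/(2\pi)^n$ equal to $1/\pi^n$. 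A quick sanity check: $\omega_0/2\pi$ represents the integral generator $c_1(H)$ of $H^2(\PC,\Z)$, so $\int_{\PC}\omega_0^n/(2\pi)^n=1$, while the right-hand side integrates to $1/n!$. Your proposed shortcut at $z=0$ has the same defect: there $\omega_0^n=(i\sum_k dz_k\wedge d\bar z_k)^n=2^n n!\,dV$, so the identity is not "immediate" at the origin--it visibly fails by $n!$. What your computation actually proves is that the correct statement is $\Omega=\omega_0^n/\bigl((2\pi)^n n!\bigr)=\pi^{-n}(1+|z|^2)^{-(n+1)}dV(z)$, i.e. the middle term of the lemma (and the paper's definition $\Omega=\omega^n/(2\pi)^n$ in Section 2) is missing the factor $1/n!$ that converts the top wedge power of a K\"ahler form into the Riemannian volume form. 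The right-hand side is the normalization the paper genuinely uses afterwards: it is what makes $d\nu_m$ a probability measure and gives $\langle z^p,z^p\rangle_m = p!\,(m-|p|)!/m!$. So your strategy is the right one, but you should carry the constant through and flag the $n!$ discrepancy, rather than assert an agreement that does not hold.
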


We also recall that the Hopf fibration of $\PC$ is given by
\begin{align*} 
  \pi : S^{2n+1} & \rightarrow \PC \\ 
  w &\mapsto [w],
\end{align*} 
where $S^{2n+1} \subset \C^{n+1}$ is the unit sphere centered at the
origin.

We denote with $\SU(n+1)$ the Lie group of $(n+1)\times (n+1)$ unitary
matrices with determinant $1$. In other words, for $A$ a complex
$(n+1)\times (n+1)$ matrix with $\det(A) = 1$, we have $A \in
\SU(n+1)$ if and only if $A^*A = I_{n+1}$. For $Z_{n+1}$ the group of
$(n+1)$-th roots of unity in $\C$, we also consider the quotient Lie
group $\PSU(n+1,\C) = \SU(n+1,\C)/Z_{n+1}I_{n+1}$, and we denote with
$\lambda$ the natural quotient map of $\SU(n+1)$ onto $\PSU(n+1)$. It is
well known that these Lie groups are connected and compact.  The
following result is a consequence of Example~10.5 of \cite{KNII} and
Section~4 in Chapter~VIII of \cite{Helgason}.

\begin{proposition}\label{prop-Iso-PSU}
  The natural action of $\SU(n+1)$ on $S^{2n+1}$ induces a holomorphic
  action of $\PSU(n+1)$ on $\PC$ that satisfies the following
  properties:
  \begin{itemize}
  \item $\lambda(A)[w] = [Aw]$ for every $w \in S^{2n+1}$ and $A \in
    \SU(n+1)$.
  \item The induced $\PSU(n+1)$-action on $\PC$ realizes the connected
    component $\Iso_0(\PC)$ of the group of isometries of $\PC$ for
    the Fubini-Study metric.
  \end{itemize}
  Hence, we have an isomorphism of Lie groups $\Iso_0(\PC) \simeq
  \PSU(n+1)$.
\end{proposition}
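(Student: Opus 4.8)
The plan is to first build the action at the level of $\SU(n+1)$, verify that it descends to $\PSU(n+1)$, check that it is holomorphic and isometric, and finally identify its image with the full identity component $\Iso_0(\PC)$. For the first part, I would set $A \cdot [w] = [Aw]$ for $A \in \SU(n+1)$ and $[w] \in \PC$. This is well defined on projective classes since $A(\mu w) = \mu (Aw)$ for every scalar $\mu$, and it is holomorphic because in each chart $\varphi_j$ it is expressed as a quotient of linear functions of the homogeneous coordinates. The scalar matrices $\zeta I_{n+1}$ with $\zeta \in Z_{n+1}$ satisfy $[\zeta w] = [w]$, so they act trivially; hence the action factors through the quotient map $\lambda$ and yields the first stated property $\lambda(A)[w] = [Aw]$.

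Next I would show the action is by isometries of the Fubini-Study metric. The cleanest route is to use the Hopf fibration $\pi : S^{2n+1} \rightarrow \PC$. Since $\SU(n+1)$ preserves the standard Hermitian inner product on $\C^{n+1}$, it acts on $S^{2n+1}$ preserving the round metric and commutes with the $S^1$-action defining the fibration; as $\pi$ is a Riemannian submersion onto $\PC$ with the Fubini-Study metric, the action descends to an isometric action on $\PC$. Alternatively, working directly with the potential \eqref{potencial}, one notes that locally $\log f_j = \log \|w\|^2 - \log |w_j|^2$ with $\log |w_j|^2$ pluriharmonic, so that $\omega = i\partial \overline{\partial} \log \|w\|^2$; since $\|Aw\|^2 = \|w\|^2$ for every $A \in \SU(n+1)$, the form $\omega$ of \eqref{simplectic-form} is preserved. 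Either way one concludes that $\PSU(n+1)$ acts on $\PC$ by holomorphic isometries, yielding a Lie group homomorphism $\PSU(n+1) \rightarrow \Iso_0(\PC)$, whose image lands in the identity component because $\PSU(n+1)$ is connected.

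The main obstacle is to prove this homomorphism is an isomorphism onto $\Iso_0(\PC)$. Injectivity I would obtain by showing the kernel of the $\SU(n+1)$-action is exactly the scalars $Z_{n+1} I_{n+1}$: if $[Aw] = [w]$ for all $w$, then every vector is an eigenvector of $A$, forcing $A = cI_{n+1}$, and $\det(A) = 1$ gives $c \in Z_{n+1}$. Thus $\PSU(n+1) \hookrightarrow \Iso_0(\PC)$ is injective with closed image. For surjectivity I would invoke the symmetric space structure: $\PC$ is an irreducible Hermitian symmetric space of compact type, and by the cited results of \cite{Helgason} the identity component of its isometry group is realized by $\PSU(n+1)$. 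Equivalently, a dimension count gives $\dim \Iso_0(\PC) = (n+1)^2 - 1 = \dim \PSU(n+1)$, so the injective homomorphism has open image in the connected group $\Iso_0(\PC)$ and is therefore surjective. This is precisely the content extracted from Example~10.5 of \cite{KNII} together with Chapter~VIII, Section~4 of \cite{Helgason}, and it yields the desired isomorphism $\Iso_0(\PC) \simeq \PSU(n+1)$.
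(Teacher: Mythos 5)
Your proposal is correct and ultimately rests on the same foundation as the paper, which offers no proof of its own but simply cites Example~10.5 of \cite{KNII} and Chapter~VIII, Section~4 of \cite{Helgason} for this proposition. You fill in the routine verifications (well-definedness, holomorphy, descent through $\lambda$, isometry via the Hopf fibration or the potential, and injectivity via the eigenvector argument), but the crux --- surjectivity onto $\Iso_0(\PC)$, equivalently the dimension of the isometry group --- is deferred to exactly the references the paper invokes, so the two arguments are essentially the same.
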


\section{Quantum line bundles and quantization on compact K\"ahler
  manifolds}\label{sec:quantum}
Let $M$ be a K\"ahler manifold with K\"ahler form $\omega$. We will
now recall the notion of a quantum line bundle over $M$ which allows
to consider the Berezin-Toeplitz quantization for suitable sections
over $M$. We refer to \cite{Griffiths-Harris} and \cite{Schli} for
further details.

Suppose that $L \rightarrow M$ is a holomorphic line bundle. For any
such line bundle, we will denote with $\Gamma(U,L)$ the space of
smooth sections of $L$ over an open set $U$ of $M$. A Hermitian metric
$h$ on $L$ is a smooth choice of Hermitian inner products on the
fibers of $L$. For such metric, the pair $(L,h)$ is called a Hermitian
line bundle.

A connection $D$ on $L$ is given by assignments
$$
D : \Gamma(U,L) \rightarrow \Gamma^1(U,L),
$$
where $U$ is an open subset of $M$ and $\Gamma^k(U,L)$ denotes the
space of $L$-valued $k$-forms over $U$; also, $D$ must be complex
linear and satisfy the following property
$$
D(f\zeta) = df\otimes\zeta + fD\zeta,
$$
for every $f\in C^\infty(U)$ and $\zeta \in \Gamma(U,L)$. We can
extend $D$ as a derivation on $L$-valued forms so that it defines maps
$D : \Gamma^k(U,L) \rightarrow \Gamma^{k+1}(U,L)$. Then, the curvature
of $D$ is defined as $D^2 : \Gamma(U,L) \rightarrow \Gamma^2(U,L)$. It
is well known that $D^2$ is linear with respect to the multiplication
by smooth functions, which implies that $D^2$ defines a $L\otimes
L^*$-valued $2$-form. The latter is called the curvature of $D$.

Also, the connection $D$ is said to be compatible with $h$ if the
following conditions are satisfied:
\begin{itemize}
\item For every $\zeta_1,\zeta_2 \in \Gamma(U,L)$ and $X$ a smooth
  vector field over $U$ we have
  $$
  X(h(\zeta_1,\zeta_2)) = h(D_X \zeta_1,\zeta_2) +
  h(\zeta_1,D_X\zeta_2). 
  $$
\item For every holomorphic section $\zeta \in \Gamma(U,L)$ and $X$ a
  smooth vector field over $U$ of type $(0,1)$ we have
  $$
  D_X \zeta = 0.
  $$
\end{itemize}

The following result establishes the existence and uniqueness of
compatible connections. It also provides an expression for the
curvature form. We refer to \cite{Griffiths-Harris} for its proof.

\begin{proposition}\label{prop:conn_curv}
  For a Hermitian line bundle $(L,h)$ over a K\"ahler manifold $M$,
  there exist a unique connection $D$ that is compatible with $h$. The
  curvature can be considered as a complex-valued $(1,1)$-form
  $\Theta$. Moreover, if $\zeta$ is a local holomorphic section of $L$
  on the open set $U$, then on this open set the curvature is given by
  $\Theta = \overline{\partial}\partial\log(h(\zeta,\zeta))$.
\end{proposition}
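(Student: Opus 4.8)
The plan is to prove Proposition~\ref{prop:conn_curv} in three stages: first establish existence and uniqueness of the compatible connection by writing it down explicitly in a local holomorphic frame, then verify that the two compatibility conditions force this expression, and finally compute the curvature to obtain the stated formula $\Theta = \overline{\partial}\partial\log(h(\zeta,\zeta))$.

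First I would fix a local holomorphic frame $\zeta$ for $L$ over an open set $U$, so that every smooth section on $U$ is of the form $s\zeta$ for some $s \in C^\infty(U)$, and set $\phi = h(\zeta,\zeta)$, a positive smooth real-valued function. Since $D$ is determined by its action on the frame via the Leibniz rule $D(s\zeta) = ds\otimes\zeta + sD\zeta$, it suffices to determine $D\zeta$, which I can write as $D\zeta = \theta\otimes\zeta$ for a unique $1$-form $\theta$ called the connection form. The second compatibility condition (vanishing on $(0,1)$ vector fields applied to the holomorphic section $\zeta$) forces $\theta$ to be of type $(1,0)$, so $\theta = \partial(\text{something})$ will emerge. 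The first compatibility condition, applied to $X$ and to the pair $(\zeta,\zeta)$, gives $X\phi = h(\theta(X)\zeta,\zeta) + h(\zeta,\overline{\theta(X)}\,\zeta)$; splitting $X$ into its $(1,0)$ and $(0,1)$ parts and using that $\theta$ is type $(1,0)$, I can solve to get $\theta = \partial\log\phi = \phi^{-1}\partial\phi$. This simultaneously proves existence (the formula defines a connection) and uniqueness (the two conditions pin down $\theta$ completely on each frame, and patched frames agree since the definition is coordinate-free).

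Next I would compute the curvature $D^2$ on the frame. Since $D^2$ is $C^\infty$-linear it is represented by a $2$-form $\Theta$ with $D^2\zeta = \Theta\otimes\zeta$, and the standard computation gives $\Theta = d\theta + \theta\wedge\theta$; because $L$ is a line bundle the $\theta\wedge\theta$ term vanishes, leaving $\Theta = d\theta = d(\partial\log\phi)$. Writing $d = \partial + \overline{\partial}$ and using $\partial^2 = 0$ yields $\Theta = \overline{\partial}\partial\log\phi = \overline{\partial}\partial\log(h(\zeta,\zeta))$, which is the asserted formula. This is manifestly a $(1,1)$-form, confirming the statement that the curvature is a complex-valued $(1,1)$-form.

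The main obstacle I expect is not any single calculation but rather the bookkeeping needed to confirm that the locally defined connection form $\theta$ glues into a globally well-defined connection $D$: under a change of holomorphic frame $\zeta' = g\zeta$ with $g$ a nowhere-vanishing holomorphic function, the metric coefficient transforms as $\phi' = |g|^2\phi$, and one must check that the transformation law for $\theta$ is exactly the one required of a connection $1$-form. The key point making this work is that $\log|g|^2 = \log g + \log\overline{g}$ with $\log g$ holomorphic, so $\partial\log|g|^2 = \partial\log g = dg/g$, which is precisely the correction term demanded by the Leibniz rule. Verifying this compatibility across overlaps, together with checking that the curvature $\Theta$ is therefore frame-independent (as it must be, being a tensor), is the part that requires genuine care; the rest reduces to the routine local computations sketched above.
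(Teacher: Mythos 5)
Your proposal is correct: it is the standard Chern-connection argument, determining the connection form in a local holomorphic frame as $\theta = \partial\log h(\zeta,\zeta)$ from the two compatibility conditions, and then computing $\Theta = d\theta = \overline{\partial}\partial\log h(\zeta,\zeta)$, which is visibly of type $(1,1)$ and frame-independent. The paper gives no proof of its own---it defers to Griffiths--Harris---and your argument is essentially the one found in that reference, so the approaches agree.
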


The connection from Proposition~\ref{prop:conn_curv} is called the
Hermitian connection of the Hermitian line bundle.

A Hermitian line bundle $(L,h)$ over a K\"ahler manifold $M$ is called
a quantum line bundle if it satisfies the condition
$$
\Theta = -i\omega,
$$
where $\Theta$ is the curvature of $L$ and $\omega$ is the K\"ahler
form of $M$. By Proposition~\ref{prop:conn_curv}, this is equivalent
to requiring
$$
\omega = i\overline{\partial}\partial\log(h(\zeta,\zeta)),
$$
for every open set $U\subset M$ and every local holomorphic section of
$L$ defined on $U$.

For any Hermitian line bundle $(L,h)$ and $m \in \Z_+$, we denote
$L^m = L \otimes \dots \otimes L$ ($m$ times), which is itself a
Hermitian line bundle with the induced metric. We will denote the
latter by $h^{(m)}$. Furthermore, it is easy to see that the Hermitian
connection of $L^m$ is the induced connection of $D$ to the tensor
product. We will denote such connection by $D^{(m)}$.

We recall that the volume form of the K\"ahler manifold $(M,\omega)$ is
given by $\Omega = \omega^n/(2\pi)^n$, where $n = \dim_\C(M)$. Note
that for a quantum line bundle, this volume can be considered as
coming from the geometry of either $M$ or $L$.

In the rest of this section we assume that $M$ is compact, and so that
$\Omega$ has finite total volume. On each space $\Gamma(M,L^m)$ we
define the Hermitian inner product
$$
\langle\zeta,\xi\rangle = \int_M h^{(m)}(\zeta,\xi)\Omega, 
$$
where $\zeta,\xi \in \Gamma(M,L^m)$. The $L_2$-completion of the
latter Hermitian space is denoted by $L_2(M,L^m)$. Since $M$ is
compact, the space of global holomorphic sections of $L^m$, denoted by
$\Gamma_{hol}(M,L^m)$, is finite dimensional and so closed in
$L_2(M,L^m)$. In particular, we have an orthogonal projection
$$
\Pi_m : L_2(M,L^m) \rightarrow \Gamma_{hol}(M,L^m),
$$
for every positive integer $m$.

\section{Quantization and Bergman spaces on
  $\PC$}\label{sec:quantum-PC} 
In this section we recollect some known properties and facts of the
projective space $\PC$ that provide its quantization and the Bergman
spaces on it. For further details on the less elementary facts we
refer to \cite{Griffiths-Harris} and \cite{Schli}.

Recall that the tautological or universal line bundle of $\PC$ is
given by
$$
T = \{ ([w],z) \in \PC\times\C^{n+1} : z \in \C w\},
$$
and assigns to every point in $\PC$ the line in $\C^{n+1}$ that such
point represents. It is well known that $T$ is a holomorphic line
bundle. Furthermore, $T$ has a natural Hermitian metric $h_0$
inherited from the usual Hermitian inner product on $\C^{n+1}$. Let us
denote by $H = T^*$ the dual line bundle with the corresponding
induced metric $h$ dual to the metric $h_0$ on $T$. The line bundle
$H$ is called the hyperplane line bundle. The following well known
result provides a quantization for $\PC$ as described in
Section~\ref{sec:quantum}.

\begin{proposition}
  The Hermitian line bundle $(H,h)$ is a quantum line bundle over
  $\PC$. 
\end{proposition}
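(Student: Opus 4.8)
The plan is to verify the defining condition $\Theta = -i\omega$ directly, using the curvature formula from Proposition~\ref{prop:conn_curv}. Since $H = T^*$ is a line bundle, it suffices to exhibit a local holomorphic frame on each chart $U_j$, compute the squared norm of the dual frame with respect to $h$, and feed the result into the formula $\Theta = \overline{\partial}\partial\log(h(\zeta,\zeta))$.

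First I would describe the natural holomorphic section of $T$ over $U_j$. On this set the assignment $[w] \mapsto ([w], \frac{1}{w_j} w)$ is a nowhere-vanishing holomorphic section $s_j$ of $T$; denote by $s_j^*$ the dual holomorphic frame of $H$ over $U_j$, characterized by $s_j^*(s_j) = 1$. Because $h_0$ is inherited from the standard Hermitian inner product on $\C^{n+1}$, a direct computation gives
$$
h_0(s_j, s_j) = \left\| \frac{1}{w_j} w \right\|^2 = \sum_{k=0}^n \frac{w_k \overline{w}_k}{w_j\overline{w}_j} = f_j,
$$
which is precisely the potential function in \eqref{potencial}. Since $h$ is the metric on $H$ dual to $h_0$ and we are dealing with line bundles, the squared norm of the dual frame is the reciprocal, namely $h(s_j^*, s_j^*) = 1/f_j$.

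Next I would apply Proposition~\ref{prop:conn_curv} to the holomorphic frame $s_j^*$ of $H$ on $U_j$, obtaining
$$
\Theta = \overline{\partial}\partial\log(h(s_j^*, s_j^*)) = \overline{\partial}\partial\log(1/f_j) = -\overline{\partial}\partial\log f_j.
$$
Using the identity $\overline{\partial}\partial = -\partial\overline{\partial}$ (a consequence of $d^2 = 0$) together with the definition \eqref{simplectic-form} of $\omega$, this becomes $\Theta = \partial\overline{\partial}\log f_j = -i\omega$ on $U_j$. As the charts $U_j$ cover $\PC$ and both $\Theta$ and $\omega$ are globally defined, the identity $\Theta = -i\omega$ holds on all of $\PC$, so $(H,h)$ is a quantum line bundle.

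The steps are essentially routine once the frames are correctly set up; the only point requiring care is the bookkeeping of signs and the factor of $i$. In particular, I would double-check that the reciprocal relation for the dual metric, the sign flip in passing from $\overline{\partial}\partial$ to $\partial\overline{\partial}$, and the sign convention in \eqref{simplectic-form} combine to give exactly $-i\omega$ rather than $+i\omega$.
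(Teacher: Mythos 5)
Your proof is correct. The paper itself gives no argument for this proposition --- it is stated as a well-known fact, with the details deferred to \cite{Griffiths-Harris} and \cite{Schli} --- so there is no internal proof to compare against; what you have written is exactly the standard computation those references carry out, and it fits together correctly with the paper's own conventions. The points that need care all check out: the section $s_j([w]) = ([w], w/w_j)$ of $T$ over $U_j$ is well defined (replacing $w$ by $\lambda w$ leaves $w/w_j$ unchanged), holomorphic, and nowhere vanishing; for a line bundle the dual metric of a frame is the reciprocal, so $h(s_j^*,s_j^*) = 1/h_0(s_j,s_j) = 1/f_j$ with $f_j$ as in \eqref{potencial}; and the sign bookkeeping is right, since Proposition~\ref{prop:conn_curv} gives
$$
\Theta \;=\; \overline{\partial}\partial \log\bigl(1/f_j\bigr)
\;=\; -\,\overline{\partial}\partial \log f_j
\;=\; \partial\overline{\partial} \log f_j
\;=\; -i\,\bigl(i\,\partial\overline{\partial}\log f_j\bigr)
\;=\; -i\,\omega
$$
on $U_j$, using $\partial\overline{\partial} + \overline{\partial}\partial = 0$ and the definition \eqref{simplectic-form} of $\omega$. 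Since the $U_j$ cover $\PC$ and both $\Theta$ and $\omega$ are globally defined, the identity $\Theta = -i\omega$ holds everywhere, which is the quantum line bundle condition as the paper states it.
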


For every $m\in \mathbb{Z}_+$ and with respect to the coordinates
given by $\varphi_0$, the weighted measure on $\PC$ with weight $m$ is
given by
$$
d\nu_m(z) = \frac{(n+m)!}{m!}\frac{\Omega(z)}{(1 + |z_1|^2 + \dots +
  |z_n|^2)^m},
$$
which has the following explicit expression
$$
d\nu_m(z) = \frac{(n+m)!}{\pi^n m!} \frac{dV(z)}{(1 + |z_1|^2 +
    \dots + |z_n|^2)^{n+m+1}}
$$
where, as before, $dV(z) = dx_1\wedge dy_1 \wedge \dots \wedge
dx_n\wedge dy_n$ is the Lebesgue measure on $\C^n$. For the sake of
simplicity, we will use the same symbol $d\nu_m$ to denote the
weighted measures for both $\PC$ and $\C^n$. Note that that $d\nu_m$
is a probability measure. Following the remarks of
Section~\ref{sec:quantum}, the Hilbert space $L_2(\PC, H^m )$ denotes
the $L_2$-completion of $\Gamma(\PC, H^m )$ with respect to the inner
product defined using the measure $d\nu_m$.

The line bundle $H$ can be trivialized over each subset $U_j\subset
\PC$ ($U_j$ as in Section~\ref{sec:geom-PC}) so that the corresponding
set of transition functions for $H^m$ are given as follows
\begin{align*}
  g^m_{kj} : U_j\cap U_k &\rightarrow \C^* \\
  [w] &\mapsto \frac{w_k^m}{w_j^m}.
\end{align*}
In particular, there is a trivialization of $H^m$ over $U_0$. Then,
every section $\zeta$ of $H^m$ restricted to $U_0$ can be considered
as a map $\zeta|_{U_0} : U_0 \rightarrow \C$. Since $\varphi_0^{-1} :
\C^n \rightarrow U_0$ defines a biholomorphism, the composition
$\widehat{\zeta} = \zeta|_{U_0}\circ\varphi_0^{-1}$ maps $\C^n
\rightarrow \C$. Then, we have the following well known result.

\begin{proposition}\label{prop:isoPC_C}
  The map given by
  \begin{align*}
    \Phi_0 : L_2(\PC, H^m) &\rightarrow L_2(\C^n, \nu_m) \\
    \zeta &\mapsto \widehat{\zeta},
  \end{align*}
  is an isometry of Hilbert spaces.
\end{proposition}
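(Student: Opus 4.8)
The plan is to reduce everything to the single chart $U_0$ and to compare the two inner products there. Since the complement $\PC \setminus U_0$ is the hyperplane $\{[w] : w_0 = 0\}$, which is a proper analytic subset and hence has measure zero for $\Omega$ (and therefore for $d\nu_m$), the restriction map $\zeta \mapsto \zeta|_{U_0}$ identifies $L_2(\PC, H^m)$ isometrically with the corresponding $L_2$-space of sections of $H^m$ over $U_0$. Thus it suffices to show that, under the trivialization of $H^m$ over $U_0$ followed by the biholomorphism $\varphi_0$, the defining inner product of $L_2(\PC, H^m)$ is carried to $\int_{\C^n} \widehat{\zeta}\,\overline{\widehat{\xi}}\; d\nu_m$.

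The key step is to express the fiber metric $h^{(m)}$ in the holomorphic frame that trivializes $H^m$ over $U_0$. First I would identify, over $U_0$, the nonvanishing holomorphic section $e_0$ of the tautological bundle $T$ given by $e_0([w]) = w/w_0$, which in the coordinates of $\varphi_0$ reads $(1, z_1, \dots, z_n)$; since $h_0$ is induced by the standard inner product of $\C^{n+1}$, one gets $h_0(e_0, e_0) = 1 + |z_1|^2 + \dots + |z_n|^2$, i.e. the potential $f_0$ of \eqref{potencial}. Passing to the dual bundle $H = T^*$ with its induced dual metric $h$, the dual frame $e_0^*$ satisfies $h(e_0^*, e_0^*) = f_0^{-1}$, and taking the $m$-th tensor power gives $h^{(m)}\bigl((e_0^*)^{\otimes m}, (e_0^*)^{\otimes m}\bigr) = (1 + |z_1|^2 + \dots + |z_n|^2)^{-m}$. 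Writing the restricted section as $\zeta|_{U_0} = (\widehat{\zeta}\circ\varphi_0)\,(e_0^*)^{\otimes m}$, the pointwise inner product becomes $h^{(m)}(\zeta, \xi) = (\widehat{\zeta}\circ\varphi_0)\,\overline{(\widehat{\xi}\circ\varphi_0)}\,(1+|z|^2)^{-m}$.

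It then remains to combine this metric factor with the volume element. Using the formula $\Omega = \pi^{-n}(1+|z|^2)^{-(n+1)}\,dV(z)$ established in the lemma of Section~\ref{sec:geom-PC}, and inserting the normalizing constant $(n+m)!/m!$ that enters the definition of the inner product on $L_2(\PC, H^m)$, the weight $(1+|z|^2)^{-m}$ produced by $h^{(m)}$ merges with $\Omega$ into $(1+|z|^2)^{-(n+m+1)}\,dV(z)$, reproducing exactly $d\nu_m$. This yields $\|\zeta\|^2_{L_2(\PC, H^m)} = \int_{\C^n} |\widehat{\zeta}|^2\, d\nu_m = \|\Phi_0\zeta\|^2$, so $\Phi_0$ preserves norms and, being linear, preserves inner products by polarization. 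Surjectivity follows from the same measure-zero observation: any element of $L_2(\C^n, \nu_m)$ transports through $\varphi_0^{-1}$ and the frame $(e_0^*)^{\otimes m}$ to an $L_2$-section over $U_0$, which represents a unique class in $L_2(\PC, H^m)$, so $\Phi_0$ is an isometric isomorphism. I expect the main obstacle to be the bookkeeping in the second step: correctly computing the induced dual metric on $H^m = (T^*)^{\otimes m}$ and matching it, together with the factor from the volume lemma and the combinatorial constant, against the explicit form of $d\nu_m$; once the pointwise identity $h^{(m)}\bigl((e_0^*)^{\otimes m}, (e_0^*)^{\otimes m}\bigr) = f_0^{-m}$ is in hand, the remainder is a direct substitution.
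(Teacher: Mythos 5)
Your proof is correct. There is in fact nothing in the paper to compare it against: Proposition~\ref{prop:isoPC_C} is stated there as a well-known result with no proof given. Your computation is the standard verification that the paper's conventions are set up to make work, and every step checks out: the frame $e_0([w]) = w/w_0$ of $T$ over $U_0$ satisfies $h_0(e_0,e_0) = f_0$, hence $h^{(m)}\bigl((e_0^*)^{\otimes m},(e_0^*)^{\otimes m}\bigr) = f_0^{-m}$, and this pointwise weight merges with $\Omega = \pi^{-n}(1+|z|^2)^{-(n+1)}\,dV$ and the constant $(n+m)!/m!$ to reproduce $d\nu_m$ exactly; the measure-zero complement $\PC\setminus U_0$ and polarization then give the isometric isomorphism.

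One point worth flagging, which your write-up resolves implicitly: the paper is ambiguous about where the constant $(n+m)!/m!$ lives. Section~\ref{sec:quantum} defines the inner product on sections as $\int_M h^{(m)}(\zeta,\xi)\,\Omega$ with no constant, while Section~\ref{sec:quantum-PC} says $L_2(\PC,H^m)$ carries ``the inner product defined using the measure $d\nu_m$,'' whose definition already contains that constant. Your reading --- that the normalization $(n+m)!/m!$ is built into the inner product of $L_2(\PC,H^m)$ --- is the only one under which $\Phi_0$ is literally an isometry (otherwise it is an isometry only up to the scalar $\bigl((n+m)!/m!\bigr)^{1/2}$), and it is the reading consistent with the paper's later computation of $\langle z^p,z^p\rangle_m$ on the Bergman space. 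Your surjectivity argument is also sound, granting the standard fact that smooth sections are $L_2$-dense, so that $L_2(\PC,H^m)$ is identified with the space of measurable $L_2$-sections.
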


The weighted Bergman space on $\PC$ with weight $m\in \Z_+$ is defined
by
\begin{align*}
  \mA^2_m(\PC) &= \{ \zeta \in L_2(\PC,H^m) : \zeta \mbox{ is
    holomorphic}
  \}\\
  &= \Gamma_{hol}(\PC,H^m).
\end{align*}

As remarked before, since $\PC$ is compact, the space $\mA^2_m(\PC)$
is finite dimensional. Furthermore, the next well known result
provides an explicit description of these Bergman spaces.

\begin{proposition}\label{prop:sections_polys}
  For every $m \in \Z_+$, the Bergman space $\mA^2_m(\PC)$ satisfies
  the following properties.
  \begin{enumerate}
  \item With respect to the homogeneous coordinates of $\PC$, the
    Bergman space $\mA^2_m(\PC)$ can be identified with the space
    $P^{(m)}(\C^{n+1})$ of homogeneous polynomials of degree $m$ over
    $\C^{n+1}$.
  \item For $\Phi_0$ the isometry from Proposition~\ref{prop:isoPC_C},
    we have
    $$
    \Phi_0(\mA^2_m(\PC)) = P_m(\C^n),
    $$
    where $P_m(\C^n)$ denotes the space of polynomials on $\C^n$ of
    degree $\leq m$.
  \end{enumerate}
\end{proposition}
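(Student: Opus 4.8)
The plan is to prove part~(1) by realizing global holomorphic sections of $H^m$ as homogeneous functions on the punctured total space of the tautological bundle, and then to deduce part~(2) by tracking these sections through the trivialization over $U_0$. Since $\PC$ is compact and holomorphic sections are smooth, every element of $\Gamma_{hol}(\PC,H^m)$ already lies in $L_2(\PC,H^m)$, so the Bergman space $\mA^2_m(\PC)$ is exactly $\Gamma_{hol}(\PC,H^m)$ and the $L_2$-condition imposes nothing further; the entire content is the identification of holomorphic sections with polynomials.

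For part~(1), I would use that the fiber of $T$ over $[w]$ is the line $\C w \subset \C^{n+1}$, so the fiber of $H^m = (T^*)^{\otimes m}$ over $[w]$ is the space of homogeneous degree-$m$ functions on $\C w$. Given a section $\zeta$ of $H^m$, define $\widetilde{\zeta} : \C^{n+1}\setminus\{0\} \to \C$ by $\widetilde{\zeta}(w) = \zeta([w])(w^{\otimes m})$, that is, evaluate the functional $\zeta([w])$ on $w^{\otimes m}$. Directly from this definition $\widetilde{\zeta}$ is homogeneous of degree $m$, i.e. $\widetilde{\zeta}(\lambda w) = \lambda^m \widetilde{\zeta}(w)$, and $\widetilde{\zeta}$ is holomorphic precisely when $\zeta$ is. Conversely, each homogeneous polynomial $p$ of degree $m$ yields a holomorphic section $\zeta_p$ by the rule $\zeta_p([w])(w^{\otimes m}) = p(w)$. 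These two $\C$-linear assignments are mutually inverse, so it only remains to see that every holomorphic $\widetilde{\zeta}$ coming from a section is a homogeneous polynomial.

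This is where the main work lies. Because $n+1 \ge 2$, I would invoke Hartogs' extension theorem to extend $\widetilde{\zeta}$ holomorphically across the origin to an entire function on $\C^{n+1}$; an entire function homogeneous of degree $m$ must have a Taylor expansion at $0$ consisting only of degree-$m$ monomials, hence is a homogeneous polynomial of degree $m$. This identifies $\mA^2_m(\PC) = \Gamma_{hol}(\PC,H^m)$ with $P^{(m)}(\C^{n+1})$. I expect the only delicate point to be the clean identification of the fiber $H^m_{[w]}$ with degree-$m$ functions on the line $\C w$, so that the homogeneity of $\widetilde{\zeta}$ is transparent; once the passage to $\C^{n+1}\setminus\{0\}$ is set up, the Hartogs step and the homogeneity argument are formal.

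For part~(2), I would compute the effect of $\Phi_0$ on the section $\zeta_p$ attached to a homogeneous polynomial $p$. Taking the nonvanishing frame of $H^m$ over $U_0$ whose $m$-th tensor power evaluates to $w_0^m$ on $w^{\otimes m}$ (compatible with the transition functions $g^m_{k0}([w]) = w_k^m/w_0^m$), the local representative of $\zeta_p$ over $U_0$ is $[w]\mapsto p(w)/w_0^m$; composing with $\varphi_0^{-1}$ and setting $w_0 = 1,\ w_k = z_k$ gives $\widehat{\zeta_p}(z) = p(1,z_1,\dots,z_n)$. The dehomogenization $p \mapsto p(1,z)$ is a linear isomorphism of $P^{(m)}(\C^{n+1})$ onto $P_m(\C^n)$, with inverse the homogenization $q \mapsto w_0^m\, q(w_1/w_0,\dots,w_n/w_0)$. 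Combining this with part~(1) yields $\Phi_0(\mA^2_m(\PC)) = P_m(\C^n)$, as claimed.
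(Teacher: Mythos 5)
Your proof is correct. Note that the paper itself gives no proof of this proposition: it is stated as a well-known result, with the reader referred to \cite{Griffiths-Harris} and \cite{Schli} for details, so there is no internal argument to compare against. What you have written is the standard computation of $\Gamma_{hol}(\PC,H^m)$ (in cohomological language, $H^0(\PP^n(\C),\OO(m))\simeq P^{(m)}(\C^{n+1})$), and each step holds up: the fiber of $H^m$ over $[w]$ is canonically the space of degree-$m$ homogeneous functions on the line $\C w$, so a global holomorphic section corresponds to a holomorphic function on $\C^{n+1}\setminus\{0\}$ homogeneous of degree $m$; Hartogs' extension applies precisely because $n+1\geq 2$; and homogeneity kills every Taylor term at the origin except those of degree $m$, so the extension is a homogeneous polynomial. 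Your observation that compactness makes the $L_2$-condition vacuous is also right, and matches the paper's own definition $\mA^2_m(\PC)=\Gamma_{hol}(\PC,H^m)$. For part (2), your choice of frame over $U_0$ is compatible with the paper's transition functions $g^m_{k0}([w])=w_k^m/w_0^m$; the local representative of the section attached to $p\in P^{(m)}(\C^{n+1})$ is $[w]\mapsto p(w)/w_0^m$, and composing with $\varphi_0^{-1}$ gives the dehomogenization $z\mapsto p(1,z_1,\dots,z_n)$, which is a linear isomorphism onto $P_m(\C^n)$ with homogenization as inverse. As a sanity check, under your identification the section attached to a monomial $w_0^{q_0}w_1^{q_1}\cdots w_n^{q_n}$ with $q_0+|(q_1,\dots,q_n)|=m$ maps to $z_1^{q_1}\cdots z_n^{q_n}$, recovering exactly the monomial basis of $P_m(\C^n)$ that the paper uses in the computations immediately following the proposition.
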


Propositions~\ref{prop:isoPC_C} and \ref{prop:sections_polys} allow us
to reduce our computations on the spaces $L_2(\PC,H^m)$ and
$\mA^2_m(\PC)$ to the spaces $L_2(\C^n,\nu_m)$ and $P_m(\C^n)$,
respectively. In what follows and when needed, we will use such
reductions by applying the corresponding identifications without
further mention.

The Bergman space $\mA^2_m(\PC)$ identified with $P_m(\C^n)$ has the
natural monomial basis which we will use for our computations. Such
monomials are denoted by $z^p=z_1^{p_1}\dots z_n^{p_n}$ where
$p=(p_1,\ldots, p_n)$ and $|p| = p_1 + \dots + p_n \leq m$. We denote
the enumerating set by $J_n(m) =\{ p\in\Z^n_+ : |p| \leq m \}$. A
direct computation shows that 
\begin{align*} 
  \langle z^p,z^p \rangle_m &=\frac{(n+m)!}{\pi^n m!} \int_{\C^n}
  \frac{z^p\overline{z}^p dV(z)}{(1+ |z_1|^2 + \cdots
    +|z_n|^2)^{n+m+1}} \\
  &=\frac{(n+m)!}{m!} \int_{\R^n_+} \frac{t_1^{p_1}\cdots
    t_n^{p_n} dt_1\cdots dr_n}{(1+t_1+\cdots +t_n)^{n+m+1}} \\
  &= \frac{p! (m-|p|)!}{m!}
\end{align*} 
for every $p,q \in J_n(M)$, where $p!=p_1!\cdots p_n!$ and $p \in
J_n(m)$. Also, it is easy to check that $\langle z^p,z^q \rangle_m =
0$ for all $p,q \in J_n(m)$ such that $p \not= q$. In particular, the
set
\begin{equation}\label{basis-Bergman} 
  \left\{\left(\frac{m!}{p!
        (m-|p|)!}\right)^{\frac{1}{2}}z^p: p\in J_n(m) \right\}
\end{equation}  
is an orthonormal basis of $\mA^2_m(\PC)$.

The following result provides the classical description of the
weighted Bergman projections of $L_2(\PC,H^m)$ onto $\mA^2_m(\PC)$. In
particular, these projections are precisely the orthogonal maps
$\Pi_m$ from Section~\ref{sec:quantum} for $\PC$.

\begin{proposition}\label{prop:Bm}
  Let $B_m : L_2(\PC,H^m) \rightarrow L_2(\PC,H^m)$ be the operator
  given by the expression
  $$
  B_m(\psi)(z)=\frac{(n+m)!}{\pi^n m!} \int_{\C^n}
  \frac{\psi(w) K(z,w)dV(w)}{(1+|w_1|^2+\cdots
    +|w_n|^2)^{n+m+1}},
  $$ 
  where $K(z,w) = (1+z_1\overline{w}_1\cdots
  +z_n\overline{w}_n)^m$. Then, $B_m$ satisfies the following
  properties
  \begin{enumerate}
  \item If $\psi \in L_2(\PC,H^m)$, then $B_m(\psi) \in \mA^2_m(\PC)$.
  \item $B_m(\psi)=\psi$ for every $\psi\in \mA^2_m(\PC)$.
  \end{enumerate}
  In particular, $B_m$ is the orthogonal projection $L_2(\PC,H^m) \rightarrow
  \mA^2_m(\PC)$. Also, $K(z,w)$ is the Bergman kernel for $L_2(\PC,H^m)$.
\end{proposition}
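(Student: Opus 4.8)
The plan is to verify directly that the integral operator $B_m$ reproduces holomorphic sections and annihilates the orthogonal complement, which simultaneously identifies it as the Bergman projection $\Pi_m$ and identifies $K(z,w)$ as the reproducing kernel. First I would establish property (2), that $B_m$ acts as the identity on $\mA^2_m(\PC) \simeq P_m(\C^n)$, since this is the reproducing-kernel property and is the crux of the matter. Because the monomials $\{z^p : p \in J_n(m)\}$ form an (orthogonal) basis of $P_m(\C^n)$ with the norms $\langle z^p, z^p\rangle_m = p!(m-|p|)!/m!$ computed just above, it suffices to check $B_m(z^p) = z^p$ for each $p \in J_n(m)$. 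Expanding the kernel $K(z,w) = (1 + z_1\overline{w}_1 + \dots + z_n\overline{w}_n)^m$ by the multinomial theorem produces a sum of terms $c_q\, z^q \overline{w}^q$ (with appropriate multinomial coefficients $c_q$) over $q \in J_n(m)$, the cross terms in $\overline{w}$ being multiples of the monomials $\overline{w}^q$. Substituting into the integral and interchanging sum and integral, each summand reduces to the orthogonality computation $\int_{\C^n} w^p \overline{w}^q\, d\nu_m(w)$, which vanishes unless $q = p$ and otherwise returns $\langle z^p, z^p\rangle_m$. The surviving term must then be shown to collapse exactly to $z^p$; this is where I expect to have to be careful, matching the multinomial coefficient $c_p$ against the reciprocal norm $m!/(p!(m-|p|)!)$ so that the two factors cancel.

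Granting property (2), property (1) follows once I show that the image of $B_m$ lands in $\mA^2_m(\PC)$, i.e. that $B_m(\psi)$ is holomorphic for arbitrary $\psi \in L_2(\PC, H^m)$. Here I would observe that, for fixed $w$, the kernel $K(z,w)$ is a holomorphic polynomial in $z$ of degree $\leq m$; since the integration is in the $w$ variable and the integrand depends holomorphically on $z$ with uniformly controlled dependence (one may differentiate under the integral sign, the domain of holomorphic polynomials of degree $\leq m$ being finite dimensional), the output $B_m(\psi)(z)$ is itself a holomorphic polynomial in $z$ of degree $\leq m$. By Proposition~\ref{prop:sections_polys} this places $B_m(\psi)$ in $P_m(\C^n) \simeq \mA^2_m(\PC)$, giving (1).

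With (1) and (2) in hand, the concluding assertions are immediate. Since $B_m$ maps $L_2(\PC, H^m)$ into the closed finite-dimensional subspace $\mA^2_m(\PC)$ by (1), and restricts to the identity on that subspace by (2), the operator $B_m$ is idempotent with range exactly $\mA^2_m(\PC)$. To conclude that $B_m$ is the \emph{orthogonal} projection $\Pi_m$, I would check self-adjointness: the kernel satisfies $\overline{K(z,w)} = K(w,z)$, so Fubini on the product domain gives $\langle B_m \psi, \phi\rangle = \langle \psi, B_m \phi\rangle$ for all $\psi, \phi$, and a self-adjoint idempotent is an orthogonal projection. The main obstacle is the bookkeeping in property (2): correctly extracting the single surviving multinomial term and confirming that its coefficient cancels against the monomial norm to reproduce $z^p$ precisely. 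Everything else is a matter of interchanging sums and integrals (justified by the finiteness of the sums after expanding $K$) and invoking the orthogonality relations and the identification with $P_m(\C^n)$ already recorded in the excerpt.
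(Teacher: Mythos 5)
Your proposal is correct, and in fact it supplies more than the paper does: the paper states Proposition~\ref{prop:Bm} without proof, presenting it as a classical fact about the weighted Bergman projection, so there is no in-text argument to compare against. Your direct verification is the standard one and it goes through exactly as you anticipate. The bookkeeping you flagged as the main obstacle works out cleanly: expanding the kernel gives
\begin{equation*}
K(z,w)=\sum_{q\in J_n(m)}\frac{m!}{q!\,(m-|q|)!}\,z^q\overline{w}^q,
\end{equation*}
and combining the multinomial coefficient with the monomial norm $\langle z^p,z^p\rangle_m = p!\,(m-|p|)!/m!$ computed in Section~\ref{sec:quantum-PC} yields $B_m(z^p)=\frac{m!}{p!\,(m-|p|)!}\cdot\frac{p!\,(m-|p|)!}{m!}\,z^p=z^p$, so (2) holds on the monomial basis and hence on all of $P_m(\C^n)\simeq\mA^2_m(\PC)$ by linearity and finite dimensionality. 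For (1), your appeal to differentiation under the integral sign can be replaced by something even simpler: since the kernel expansion is a \emph{finite} sum, $B_m(\psi)=\sum_q c_q\bigl(\int_{\C^n}\psi(w)\overline{w}^q\,d\nu_m(w)\bigr)z^q$ with each coefficient finite by Cauchy--Schwarz (monomials of degree $\leq m$ lie in $L_2(\C^n,\nu_m)$), so $B_m(\psi)$ is literally a polynomial of degree $\leq m$, hence lies in $\mA^2_m(\PC)$ by Proposition~\ref{prop:sections_polys}. Your conclusion via $\overline{K(z,w)}=K(w,z)$, Fubini (justified by the same finite expansion), and the fact that a self-adjoint idempotent with range $\mA^2_m(\PC)$ is the orthogonal projection onto that subspace is likewise sound; alternatively, one can note directly that if $\psi\perp\mA^2_m(\PC)$ then $B_m(\psi)=0$ because each coefficient $\int\psi(w)\overline{w}^q\,d\nu_m(w)$ vanishes, which together with (1) and (2) identifies $B_m$ as the orthogonal projection without invoking self-adjointness separately.
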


\section{Toeplitz operators with separately radial
  symbols}\label{sec:radial} 
We introduce a decomposition for the projective space $\PC$ which is
similar in spirit to the quasi-elliptic decomposition used for the
$n$-dimensional unit ball in \cite{QV-Ball1}.

Consider the polar coordinates $z_j=t_jr_j$ where $t_j\in \T$ and $r_j
\in \R_+$, for every $j=1,\ldots ,n$. This yields, for all points $z
\in \C^n$, an identification
$$
z=(z_1,\ldots,z_n)=(t_1r_1,\ldots,t_nr_n)=(t,r),
$$
where $t=(t_1,\ldots, t_n)\in \T^n$, $r=(r_1,\ldots, r_n)\in
\R_+^n$. In particular, we have $\C^n=\T^n\times
\R_+^n$, with the corresponding volume form
$$
dV(z)=\prod_{j=1}^{n} \frac{dt_j}{it_j}\prod_{j=1}^{n}r_j dr_j.
$$
Hence, for the measure $\nu_m$ on $\C^n$ introduced in
Section~\ref{sec:quantum-PC}, we obtain the decomposition
$$
L_2(\C^n, \nu_m)=L_2(\T^n)\otimes L_2(\R_+^n,\mu_m),  
$$
where
$$
L_2(\T^n)=\bigotimes_{j=1}^{n}L_2\left(\T,\frac{dt_j}{2\pi
    it_j}\right)
$$
and the measure $d\mu_m$ of $L_2(\R_+^n,\mu_m)$ is given by
$$
d\mu_m=\frac{(n+m)!}{m!}(1+r_1^2+\cdots +r_n^2)^{-n-m-1}
\prod_{j=1}^{n}r_j dr_j.
$$

We note that the Bergman space $\mA^2_m(\PC)$ is given, in the local
coordinates from $\varphi_0$, as the (closed) subspace of
$L_2(\C^n,\nu_m)$ which consists of all functions satisfying the
equations 
$$
\frac{\partial \varphi}{\partial \overline{z}_j}=\frac{1}{2}\left(
  \frac{\partial}{\partial x_j}+i\frac{\partial}{\partial y_j}
\right)\varphi=0, \quad j=1,\ldots,n,
$$
or, in polar coordinates,
$$
\frac{\partial\varphi}{\partial \overline{z}_j}=\frac{t_j}{2}\left(
  \frac{\partial}{\partial
    r_j}-\frac{t_j}{r_j}\frac{\partial}{\partial t_j}
\right)\varphi=0, \quad j=1,\ldots,n.
$$

Now consider the discrete Fourier transform $\F:L_2(\T)\rightarrow
l_2=l_2(\Z)$ defined by
$$
\F: f\mapsto \left\{c_j=\int_{S^1}f(f)t^{-j}\frac{dt}{2\pi i
    t}\right\}_{j\in \Z},
$$
so that, in particular, the operator $\F$ is unitary with inverse
given by
$$
\F^{-1}=\F^*: \{ c_j\}_{j\in \Z}\mapsto
f=\sum_{j\in\Z}c_j t^j
$$
Now, let us consider the operator $u : l_2\otimes
L_2((0,1),rdr)\rightarrow l_2\otimes L_2((0,1),rdr)$ given by the
composition
$$
u=(\F\otimes I) \frac{t}{2}\left( \frac{\partial}{\partial
    r}-\frac{t}{r}\frac{\partial}{\partial t} \right)(\F^{-1}\otimes
I). 
$$
Then, it is easy to check (see Subsection~4.1 of \cite{vasilev-IE})
that $u$ acts by
$$
\{ c_j(r)\}_{j\in \Z}\mapsto \left\{ \frac{1}{2}
  \left(\frac{\partial}{\partial r}-\frac{j}{r}\right) c_j(r)
\right\}_{j\in\Z} 
$$

Introduce the unitary operator
$$
U=\F_{(n)}\otimes I:L_2(\T^n)\otimes
L_2(\R_+^n,\nu_m)\rightarrow l_2(\Z^n)\otimes
L_2(\R_+^n,\nu_m)
$$
where $\F_{(n)}=\F\otimes\cdots\otimes\F$ ($n$ times). Then, the image
$\mA^2_m=U(\mA^2_m(\PC))$ under $U$ of the Bergman space is the closed
subspace of $l_2(\Z^n)\otimes L_2(\R_+^n,\nu_m)$ which
consist of all sequences $\{c_p(r)\}_{p\in \Z^n}$,
$r=(r_1,\dots,r_n)\in \R^n_+$, satisfying the equations
$$
\frac{1}{2}  \left(\frac{\partial}{\partial r_j}-\frac{p_j}{r_j}\right)
c_{p}(r)=0,
$$
for all $p_j \in \Z$ and $j = 1,\dots,n$. The general solution of this
system of equations has the form
$$
c_p(r)=\alpha^m_p c_p r^p, 
$$
for all $p \in \Z^n$, where $c_p\in \C$, $r^p=r_1^{p_1}\cdots
r_n^{p_n}$ and $\alpha_p=\alpha_{(|p_1|,\ldots,|p_n|)}$ is given by 
\begin{eqnarray*} 
  \alpha^m_p
  &=& \left( \frac{ (n+m)!}{ m!}
    \int_{\R^n_+} \frac{t_1^{p_1}\cdots t_n^{p_n}t_1 dt_1\cdots
      t_n d_n}{(1+t_1+\cdots +t_n)^{n+m+1}} \right)^{-\frac{1}{2}} \\ 
  &=&
  \left(\frac{m!}{p! (m-|p|)!}\right)^{\frac{1}{2}}
\end{eqnarray*}

We have that every function $c_p(r)=\alpha^m_p c_p r^p$ has to be in
$L_2(\R_+^n,\nu_m)$, and this integrability condition implies $c_p=0$
for every $p\in \Z^n\setminus J_n(m)$.  Hence, $\mA^2_m\subset
l_2(\Z^n)\otimes L_2(\R_+^n,\nu_m)$ coincides with the space of all
sequences that satisfy
\begin{equation} 
  c_p(r)= \left\{
    \begin{array}{cc} 
      \left(\frac{m!}{p! (m-|p|)!}\right)^{\frac{1}{2}} c_p r^p & p\in
      J_n(m) \\ 
      0 & p\in \Z^n\setminus J_n(m)
    \end{array}\right.,
\end{equation}
and for all such sequences we have
$$
\|\{c_p(r) \}_{p\in J_n}\|_{l_2(\Z^n)\otimes
  L_2(\R_+^n,\nu_m)}
=\| \{c_p \}_{p\in J_n} \|_{l_2(\Z^n)}.
$$
In particular, we have a natural isometric identification $\mA^2_m =
l_2(J_n(m))$.  These constructions allow us to consider the isometric
embedding
$$
R_0:l_2(J_n(m))\rightarrow l_2(\Z^n)\otimes
L_2(\R_+^n,\nu_m)
$$
defined by
$$
R_0: \{c_p \}_{p\in J_n(m)}
\mapsto 
c_p(r)= \left\{
  \begin{array}{cc} 
    \left(\frac{m!}{p! (m-|p|)!}\right)^{\frac{1}{2}} c_p r^p & 
    p\in J_n(m) \\ 
    0 & p\in \Z^n/J_n(m)
  \end{array}\right..
$$
For which the adjoint operator $R_0^*:l_2(\Z^n)\otimes
L_2(\R^n_+,\nu_m)\rightarrow l_2(J_n(m))$ is given by
$$
R_0^*:\{f_p(r)\}_{p\in \Z^n}\mapsto \left\{ \left(\frac{m!}{p!
      (m-|p|)!}\right)^{\frac{1}{2}}\int_{\R^n_+}r^p
  f_p(r)d\nu_m(r) \right\}_{p\in J_n(m)}.
$$ 
It is easily seen that
\begin{eqnarray*} 
  R_0^*R_0=I&:& l_2(J_n(m))\rightarrow l_2(J_n(m))\\
  R_0R_0^*=P_1&:&l_2(\Z^n)\otimes
  L_2(\R^n_+,\nu_m)\rightarrow l_2(J_n(m))
\end{eqnarray*}
where $P_1$ is the ortogonal projection of $l_2(\Z^n)\otimes
L_2(\R^n_+,\nu_m)$ onto $l_2(J_n(M)) = \mA^2_m$. Summarizing the above
we have the next result.

\begin{theorem}\label{theo:RR*}
  The operator $R=R_0^*U$ maps $L_2(\mathbb{P}^n(\C),\nu_m)$ onto
  $\mA^2_m = l_2(J_n(m))$, and its restriction
  $$
  R\arrowvert_{\mA^2_m(\mathbb{P}^n(\C))} :{\mA^2_m(\PC)}\rightarrow
  l_2(J_n(m)) 
  $$
  is an isometric isomorphism. The adjoint operator
  $$
  R^*=U^* R_0:l_2(J_n(m)) \rightarrow \mA^2_m(\PC)\subset L_2(\PC,\nu_m) 
  $$
  is an isometric isomorphism of $l_2(J_n(m))=\mA^2_m$ onto the subspace
  $\mA^2_m(\PC)$. Furthermore
  \begin{align*} 
    R R^*=I : l_2(J_n(m)) &\rightarrow l_2(J_n(m))\\
    R^*R=B_m : L_2(\PC,\nu_m) &\rightarrow  \mA^2_m(\PC)
  \end{align*} 
  where $B_m$ is the Bergman projection of $L_2(\PC,\nu_m)$ onto
  $\mA^2_m(\PC)$
\end{theorem}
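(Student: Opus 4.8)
The plan is to deduce every assertion from the two algebraic identities $R_0^*R_0 = I$ and $R_0R_0^* = P_1$ already established, together with the unitarity of $U$ and the fact that $U$ carries $\mA^2_m(\PC)$ onto $\mA^2_m = \operatorname{range}(R_0)$. Since each claim reduces to composing operators whose action on the relevant subspaces is already understood, the argument is essentially formal and requires no analytic estimate.

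First I would record that $R = R_0^*U$ is onto $l_2(J_n(m))$: the operator $U$ is unitary, hence surjective onto $l_2(\Z^n)\otimes L_2(\R_+^n,\nu_m)$, while $R_0^*$ is surjective because $R_0^*R_0 = I$ exhibits $R_0^*$ as a left inverse of $R_0$. For the restriction, I would use that $U$ maps $\mA^2_m(\PC)$ isometrically onto $\mA^2_m = \operatorname{range}(R_0)$, and that on this range $R_0^*$ inverts $R_0$ isometrically: if $U\zeta = R_0c$ then $R_0^*U\zeta = R_0^*R_0c = c$, while $\|c\| = \|R_0c\| = \|U\zeta\| = \|\zeta\|$ by the isometry of $R_0$ and of $U$. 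Thus $R|_{\mA^2_m(\PC)}$ is an isometric isomorphism onto $l_2(J_n(m))$, and dually $R^* = U^*R_0$ is an isometric isomorphism of $l_2(J_n(m))$ onto $U^*(\operatorname{range} R_0) = \mA^2_m(\PC)$, the last equality obtained by applying $U^*$ to both sides of $U(\mA^2_m(\PC)) = \operatorname{range}(R_0)$.

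It then remains to verify the two product identities. For the first, using $UU^* = I$,
$$
RR^* = R_0^*UU^*R_0 = R_0^*R_0 = I
$$
on $l_2(J_n(m))$. For the second,
$$
R^*R = U^*R_0R_0^*U = U^*P_1U,
$$
and since $P_1$ is the orthogonal projection of $l_2(\Z^n)\otimes L_2(\R_+^n,\nu_m)$ onto $\mA^2_m = \operatorname{range}(R_0)$, conjugation by the unitary $U$ produces the orthogonal projection onto $U^{-1}(\mA^2_m) = \mA^2_m(\PC)$.

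The one step deserving care, and the only point where an external result must be invoked rather than pure operator manipulation, is the identification $U^*P_1U = B_m$. By the previous paragraph $U^*P_1U$ is the orthogonal projection of $L_2(\PC,\nu_m)$ onto the closed subspace $\mA^2_m(\PC)$; by Proposition~\ref{prop:Bm} the Bergman projection $B_m$ is likewise the orthogonal projection of $L_2(\PC,\nu_m)$ onto $\mA^2_m(\PC)$. Uniqueness of the orthogonal projection onto a closed subspace of a Hilbert space then yields $U^*P_1U = B_m$, completing the argument.
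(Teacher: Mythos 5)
Your proof is correct and takes essentially the same approach as the paper, which states the theorem merely as a summary of the preceding construction (the identities $R_0^*R_0=I$ and $R_0R_0^*=P_1$, the unitarity of $U$, and $U(\mA^2_m(\PC))=\mA^2_m=\operatorname{range}(R_0)$), leaving the formal operator manipulations implicit. The one step the paper glosses over entirely --- identifying $U^*P_1U$ with $B_m$ via uniqueness of the orthogonal projection onto the closed subspace $\mA^2_m(\PC)$, using Proposition~\ref{prop:Bm} --- is precisely the step you single out and justify, so nothing is missing.
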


We note that an explicit calculation yields
\begin{align*} 
  R^*=U^*R_0 : \{c_p\}_{p\in J_n(m)} &\mapsto 
  U^*\left(\left\{\left(\frac{m!}{p! (m-|p|)!}\right)^{\frac{1}{2}}
      c_p r^p \right\}_{p\in J_n(m)}\right) \\  
  &=\sum_{p\in J_n(m)} \left(\frac{m!}{p!
      (m-|p|)!}\right)^{\frac{1}{2}} c_p (rt)^p \\ 
  &=\sum_{p\in J_n(m)}\left(\frac{m!}{p!
      (m-|p|)!}\right)^{\frac{1}{2}} c_p z^p,
\end{align*}
which implies the following result.

\begin{corollary} 
  With the notation of Theorem~\ref{theo:RR*}, the isometric
  isomorphism $R^*:l_2(J_n(m)) \rightarrow \mA^2_m(\PC)\subset
  L_2(\PC,\nu_m)$ is given by
  \begin{equation}\label{R-adjoin} 
    R^*:\{ c_p\}_{p\in J_n(m)}\mapsto \sum_{p\in J_n(m)}
    \left(\frac{m!}{p! (m-|p|)!}\right)^{\frac{1}{2}} c_p z^p.
  \end{equation}
\end{corollary}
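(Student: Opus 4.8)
The plan is to simply unwind the composition $R^* = U^* R_0$ furnished by Theorem~\ref{theo:RR*}, keeping careful track of how each factor acts on the tensor product $l_2(\Z^n)\otimes L_2(\R_+^n,\nu_m)$. First I would apply $R_0$ to a sequence $\{c_p\}_{p\in J_n(m)}$; by its defining formula this produces the element of $l_2(\Z^n)\otimes L_2(\R_+^n,\nu_m)$ whose $p$-th component is $(m!/(p!(m-|p|)!))^{1/2} c_p\, r^p$ for $p \in J_n(m)$ and which vanishes for $p \in \Z^n \setminus J_n(m)$. The normalization constant built into $R_0$ is exactly the one matching the inner products computed earlier in Section~\ref{sec:radial}, so the resulting element genuinely lies in the image and no integrability issue arises.

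Next I would apply $U^* = \F_{(n)}^{-1}\otimes I$. Since $\F_{(n)}^{-1} = \F^{-1}\otimes\cdots\otimes\F^{-1}$ and the inverse Fourier transform acts by $\{c_j\}_{j\in\Z}\mapsto \sum_{j\in\Z} c_j t^j$, the operator $U^*$ sends each surviving component, indexed by $p \in J_n(m)$, to the monomial $t^p = t_1^{p_1}\cdots t_n^{p_n}$ in the torus variables, while the identity factor $I$ leaves the radial dependence $r^p$ untouched. Collecting the finitely many nonzero terms gives
$$
R^*(\{c_p\}_{p\in J_n(m)}) = \sum_{p\in J_n(m)} \left(\frac{m!}{p!(m-|p|)!}\right)^{1/2} c_p\, r^p t^p.
$$

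Finally, I would invoke the polar-coordinate identification $z_j = t_j r_j$ from the beginning of Section~\ref{sec:radial}, under which $r^p t^p = (rt)^p = z^p$, converting the above sum into $\sum_{p\in J_n(m)} (m!/(p!(m-|p|)!))^{1/2} c_p z^p$, which is precisely the claimed formula. That $R^*$ is an isometric isomorphism onto $\mA^2_m(\PC)$ is already asserted by Theorem~\ref{theo:RR*}, so nothing beyond this bookkeeping is required. The only point needing genuine care is the correct handling of the tensor-product structure of $U^*$ — specifically that the inverse Fourier transform converts the discrete multi-index $p$ into the monomial $t^p$ rather than its conjugate — which is why I would double-check the sign conventions in the definitions of $\F$ and $\F^{-1}$ before combining the factors.
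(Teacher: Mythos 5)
Your proposal is correct and follows essentially the same route as the paper: the paper's own derivation is exactly this unwinding of $R^* = U^* R_0$, applying $R_0$ to produce the components $\left(\frac{m!}{p!(m-|p|)!}\right)^{1/2} c_p r^p$ supported on $J_n(m)$, then applying $U^* = \F_{(n)}^{-1}\otimes I$ to turn each index $p$ into the monomial $t^p$, and finally identifying $(rt)^p = z^p$. Nothing is missing, and your attention to the convention $\F^{-1}:\{c_j\}\mapsto\sum_j c_j t^j$ is exactly the point that makes the computation come out as stated.
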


A similar direct computation yields the following result.

\begin{corollary} 
  With the notation of Theorem~\ref{theo:RR*}, the isometric
  isomorphism $R\arrowvert_{\mA^2_m(\PC)} :{\mA^2_m(\PC)}\rightarrow
  l_2(J_n(m))$ is given by
  \begin{equation}\label{R-isometric} 
    R:\psi\mapsto \left\{\left(\frac{m!}{p!
          (m-|p|)!}\right)^{\frac{1}{2}} \int_{\C^n}
      \psi(z)\overline{z}^p d\nu_m(z) \right\}_{p\in J_n(m)} .
  \end{equation}
\end{corollary}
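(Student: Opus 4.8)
The plan is to obtain \eqref{R-isometric} by dualizing the explicit formula \eqref{R-adjoin} for $R^*$, rather than composing the two operators $U$ and $R_0^*$ by hand. By Theorem~\ref{theo:RR*} the operator $R^* = U^* R_0$ is genuinely the Hilbert-space adjoint of $R = R_0^* U$, since $U$ is unitary and $(R_0^* U)^* = U^* R_0$. Hence, for every $\psi \in L_2(\PC,\nu_m)$ and every $c = \{c_p\}_{p\in J_n(m)} \in l_2(J_n(m))$ we have the adjoint relation
$$
\langle R\psi, c\rangle_{l_2(J_n(m))} = \langle \psi, R^* c\rangle_{L_2(\PC,\nu_m)}.
$$

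First I would test this relation against the standard orthonormal basis $\{e_q\}_{q\in J_n(m)}$ of $l_2(J_n(m))$, where $e_q$ carries a $1$ in the $q$-th slot and $0$ elsewhere. On the one hand, $\langle R\psi, e_q\rangle_{l_2(J_n(m))}$ is exactly the $q$-th component $(R\psi)_q$ of the sequence $R\psi$, so knowing all of these determines $R\psi$ completely. On the other hand, formula \eqref{R-adjoin} evaluated at $e_q$ gives $R^* e_q = \left(\frac{m!}{q! (m-|q|)!}\right)^{\frac{1}{2}} z^q$. Expanding the right-hand inner product as an integral over $\C^n$ against $d\nu_m$, and noting that the normalizing coefficient $\left(\frac{m!}{q! (m-|q|)!}\right)^{\frac{1}{2}}$ is real and positive so that the conjugation carried by the second slot leaves it unchanged while sending $z^q$ to $\overline{z}^q$, one obtains
$$
(R\psi)_q = \left(\frac{m!}{q! (m-|q|)!}\right)^{\frac{1}{2}}\int_{\C^n}\psi(z)\,\overline{z}^q\,d\nu_m(z),
$$
which is precisely \eqref{R-isometric}. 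The restriction to $\mA^2_m(\PC)$ then inherits this formula, and by Theorem~\ref{theo:RR*} it is the announced isometric isomorphism onto $l_2(J_n(m))$.

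I expect the only delicate points to be bookkeeping rather than conceptual, chiefly confirming the orientation of the conjugation convention in $\langle\cdot,\cdot\rangle_{L_2(\PC,\nu_m)}$ so that the monomial $z^q$ in $R^* e_q$ becomes $\overline{z}^q$ under the integral while the real coefficient stays put. Alternatively, one may prove the statement by the direct computation the authors allude to: write $\psi$ in polar coordinates $z = (t_1 r_1,\dots,t_n r_n)$, apply $U = \F_{(n)}\otimes I$ to extract the angular Fourier coefficients $c_p(r)$, then apply $R_0^*$, and finally recombine the torus integral with the radial integral into a single integral over $\C^n$ using $\overline{z}^p = t^{-p} r^p$ together with the product decomposition $L_2(\C^n,\nu_m) = L_2(\T^n)\otimes L_2(\R_+^n,\mu_m)$. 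On that route the main obstacle is the normalization bookkeeping of the $2\pi$ and $\pi^n$ factors when the two integrals are fused, which the adjoint argument above sidesteps entirely.
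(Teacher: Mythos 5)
Your proof is correct, but it takes a different route from the paper. The paper obtains \eqref{R-isometric} by the direct computation it alludes to (and which you mention as an alternative at the end): it composes $R = R_0^* U$ explicitly, in exact parallel with the displayed computation of $R^* = U^* R_0$ that precedes the first corollary --- apply $\F_{(n)}\otimes I$ to extract the angular Fourier coefficients $c_p(r)$ of $\psi$, apply $R_0^*$ to integrate them against $\left(\frac{m!}{p!(m-|p|)!}\right)^{1/2} r^p$, and fuse the torus and radial integrals into a single integral over $\C^n$ via $\overline{z}^p = t^{-p} r^p$ and the tensor decomposition $L_2(\C^n,\nu_m) = L_2(\T^n)\otimes L_2(\R^n_+,\mu_m)$. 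Your primary argument instead derives \eqref{R-isometric} from the already-established formula \eqref{R-adjoin} by pure Hilbert-space duality: since $U$ is unitary and $(R_0^*)^* = R_0$, the operator $U^*R_0$ really is the adjoint of $R_0^*U$, so $(R\psi)_q = \langle R\psi, e_q\rangle = \langle \psi, R^* e_q\rangle$, and evaluating $R^* e_q = \left(\frac{m!}{q!(m-|q|)!}\right)^{1/2} z^q$ under the $\nu_m$-integral gives the formula at once (the coefficient being real, only $z^q$ picks up the conjugation). This is logically sound --- \eqref{R-adjoin} precedes the statement in the paper, so you may cite it --- and it buys exactly what you say it does: all the $2\pi$ and normalization bookkeeping was already absorbed into the proof of \eqref{R-adjoin}, so nothing needs to be recomputed. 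What the paper's computational route buys is independence: it establishes the two formulas \eqref{R-adjoin} and \eqref{R-isometric} by parallel self-contained calculations rather than deducing one from the other, which also serves as a consistency check on the normalizations. A minor observation: your duality argument proves the formula for \emph{every} $\psi \in L_2(\PC,\nu_m)$, not only for $\psi$ in the Bergman space, which is slightly stronger than the stated corollary and entirely consistent with it.
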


We now introduce a special family of symbols on $\C^n$.

\begin{definition}
  We will call a function $a(z)$, $z\in \C^n$ separately radial if
  $a(z)=a(r)$, i.e. $a$ depends only on the radial components of
  $z=(t,r)$.
\end{definition}

The separately radial symbols give rise to a $C^*$-algebra of Toeplitz
operators that can be turned simultaneously into multiplication
operators. 

\begin{theorem} \label{theo:comm-C*-algebra} Let $a$ be a bounded
  measure separately radial function. Then the Toeplitz operator $T_a$
  acting on $\mA^2_m(\PC)$ is unitary equivalent to the multiplication
  operator $\gamma_{a,m}I=RT_aR^*$ acting on $l_2(J_n(m))$, where $R$
  and $R^*$ are given by $\mathrm{(\ref{R-isometric})}$ and
  $\mathrm{(\ref{R-adjoin})}$ respectively. The sequence
  $\gamma_{a,m}=\{\gamma_{a,m}(p)\}_{p\in J_n(m)}$ is explicitely
  given by
  \begin{equation}
    \gamma_{a,m}(p)=\frac{2^n m!}{p!
      (m-|p|)!}\int_{\R^n_+} \frac{a(r_1,\dots,r_n)
      r_1^{2p_1+1}\cdots r_n^{2p_n+1} dr_1\cdots dr_n}{(1+r_1^2+\cdots
      +r_n^2)^{n+m+1}}.
  \end{equation}
\end{theorem}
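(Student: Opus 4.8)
The plan is to eliminate the Bergman projection from $RT_aR^*$ using the intertwining relations of Theorem~\ref{theo:RR*}, and then to show by a polar-coordinate computation that the separate radiality of $a$ forces the resulting operator to be diagonal on $l_2(J_n(m))$, i.e.\ a multiplication operator. First I would recall that by definition the Toeplitz operator acts by $T_a\psi = B_m(a\psi)$ for $\psi\in\mA^2_m(\PC)$, where $B_m$ is the Bergman projection. Writing $M_a$ for the operator of multiplication by $a$, this gives $RT_aR^* = RB_mM_aR^*$. Using $RR^* = I$ and $R^*R = B_m$ from Theorem~\ref{theo:RR*}, I obtain $RB_m = RR^*R = R$, so that $RT_aR^* = RM_aR^*$. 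This already replaces the Toeplitz operator by a plain multiplication operator conjugated by the isometries $R,R^*$, which is the conceptual simplification that makes the formula accessible.

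Next I would substitute the explicit expressions \eqref{R-isometric} and \eqref{R-adjoin}. Applying $R^*$ to a sequence $\{c_p\}$, multiplying by $a$, and pairing against $\overline{z}^q$ as in \eqref{R-isometric}, the $q$-th coordinate of $RM_aR^*\{c_p\}$ becomes
$$
(RM_aR^*\{c_p\})_q = \sum_{p\in J_n(m)}\left(\frac{m!}{q!(m-|q|)!}\right)^{\frac{1}{2}}\left(\frac{m!}{p!(m-|p|)!}\right)^{\frac{1}{2}}c_p\int_{\C^n}a(z)\,z^p\,\overline{z}^q\,d\nu_m(z).
$$
I would then pass to the polar coordinates $z=(t,r)$ of Section~\ref{sec:radial}, writing $z^p\overline{z}^q = t^{p-q}r^{p+q}$ (using $\overline{t}_j = t_j^{-1}$) and splitting $d\nu_m$ into its angular and radial parts.

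The key step is the angular integral. Since $a$ is separately radial, the integrand depends on $t$ only through the monomial $t^{p-q}$, and $\int_{\T^n}t^{p-q}$ against the normalized angular measure vanishes unless $p=q$, in which case it contributes the full angular mass. Hence the entire matrix $(RM_aR^*)_{q,p}$ collapses to its diagonal, which is precisely the assertion that $RM_aR^*$ is a multiplication operator $\gamma_{a,m}I$ on $l_2(J_n(m))$. This is the heart of the argument and the only place where separate radiality is used in an essential way; without it the off-diagonal angular integrals would survive.

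Finally, for $p=q$ I would evaluate the surviving radial integral $\int_{\R_+^n}a(r)\,r^{2p}\prod_j r_j\,dr_j/(1+|r|^2)^{n+m+1}$ (the substitution $s_j = r_j^2$ reduces it to a Dirichlet-type integral), collect the two normalization factors $(m!/p!(m-|p|)!)^{1/2}$ together with the constants coming from $d\nu_m$ and from the angular mass, and read off the closed form for $\gamma_{a,m}(p)$; the normalization can be cross-checked against the case $a\equiv 1$, which must return the identity. Boundedness of the multiplier, and hence well-definedness of the operator, is immediate from $|\gamma_{a,m}(p)|\le\|a\|_\infty$. I expect the main obstacle to be purely the bookkeeping of constants in this last step, since the genuine content of the theorem lies in the vanishing of the off-diagonal angular integrals established above.
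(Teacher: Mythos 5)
Your proposal is correct, and its first half is exactly the paper's argument: both of you use $RR^*=I$ and $R^*R=B_m$ from Theorem~\ref{theo:RR*} to collapse $RT_aR^*=RB_mM_aR^*$ to $RM_aR^*$. Where you diverge is the diagonalization step. The paper stays at the operator level: writing $R=R_0^*U$ with $U=\F_{(n)}\otimes I$, it notes that multiplication by a separately radial $a(r)$ commutes with the angular Fourier transform, so $RaR^*=R_0^*(UaU^*)R_0=R_0^*aR_0$, and this last operator is diagonal by construction, since $R_0$, $a$ and $R_0^*$ all act componentwise in the sequence index. You instead compute the matrix coefficients of $RM_aR^*$ and kill the off-diagonal terms by orthogonality of the characters $t^{p-q}$ on $\T^n$. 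These are two dressings of the same mechanism (separate radiality means no mixing of angular Fourier modes), but your version bypasses $U$ and $R_0$ entirely and in effect first proves the paper's subsequent corollary (diagonality of $T_a$ in the monomial basis \eqref{basis-Bergman}); it is more elementary and self-contained, at the cost of redoing by hand an integral computation that the paper's operator identities make automatic.

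Two technical points. First, a small gap: formula \eqref{R-isometric} is stated only for the restriction of $R$ to $\mA^2_m(\PC)$, and you apply it to $M_aR^*\{c_p\}$, which is not holomorphic. The fix is one line: either observe that $R=RR^*R=RB_m$ and that $B_m$ is self-adjoint and fixes each $z^q$, so the pairing formula extends to all of $L_2(\C^n,\nu_m)$; or avoid \eqref{R-isometric} altogether by writing the $q$-th coordinate as $\langle M_aR^*\{c_p\},R^*e_q\rangle$, where $e_q$ is the standard basis vector, which uses only \eqref{R-adjoin}. Second, the constants: carried out faithfully, your computation yields
\[
\gamma_{a,m}(p)=\frac{2^n\,(n+m)!}{p!\,(m-|p|)!}\int_{\R^n_+}
\frac{a(r)\,r_1^{2p_1+1}\cdots r_n^{2p_n+1}\,dr_1\cdots dr_n}{(1+r_1^2+\cdots+r_n^2)^{n+m+1}},
\]
with $(n+m)!$ where the theorem prints $m!$. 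Your own proposed sanity check settles which is right: for $a\equiv 1$ the Dirichlet integral already computed in Section~\ref{sec:quantum-PC} gives $\int_{\R^n_+}r^{2p+1}\,dr/(1+|r|^2)^{n+m+1}=2^{-n}\,p!\,(m-|p|)!/(n+m)!$, so the printed formula returns $m!/(n+m)!\neq 1$, whereas $T_1=I$ forces $\gamma_{1,m}\equiv 1$. The printed constant is a typo (the paper's own proof display silently drops the factor $(n+m)!/m!$ carried by the radial measure $d\mu_m$), so when your bookkeeping disagrees with the statement, trust your computation; the check will flag the statement, not your derivation.
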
 
\begin{proof}
  Using the previous results, the operator $T_a$ is unitary equivalent
  to the operator
  \begin{align*} 
    R T_a R^* &= R B_m a B_m R^*=R(R^* R)a(R^*R )R^* \\
    &= (R R^*)Ra R^* (R R^*)=RaR^* \\ 
    &= R^*_0 U a U^* R_0 \\ 
    &= R^*_0 (\F_{(n)}\otimes I) a (\F^{-1}_{(n)}\otimes I) R_0 \\
    &=R^*_0 a R_0.
  \end{align*}
  And the latter operator is computed as follows
  \begin{align*} 
    &R^*_0 a(r) R_0\{c_p\}_{p\in J_n(m)} \\
    &= R^*_0 \left\{\left(\frac{m!}{p! (m-|p|)!}\right)^{\frac{1}{2}}
      a(r)c_pr^p\right\}_{p\in J_n(m)} \\
    &= \left\{\left(\frac{m!}{p!(m-|p|)!}\right)^{\frac{1}{2}}
      \int_{\R^n_+} \frac{a(r)c_pr^{2p}r dr}{(1+r_1^2+\cdots
        +r_n^2)^{n+m+1}} \right\}_{p\in J_n(m)} \\
    &=\{\gamma_{a,m} (p)\cdot c_p \}_{p\in J_n(m)}.
  \end{align*}
\end{proof}
  
Hence we can diagonalize the corresponding Toeplitz operators.

\begin{corollary} 
  The Toeplitz operator $T_a$ with bounded measurable separately
  radial symbol $a(r)$ is diagonal with respect to the orthonormal
  base given by $\mathrm{(}\ref{basis-Bergman}\mathrm{)}$. More
  precisely, we have
  $$
  T_a\left( \left(\frac{m!}{p! (m-|p|)!}\right)^{\frac{1}{2}}z^p
  \right)=\gamma_{a,m}(p)\left(\frac{m!}{p!
      (m-|p|)!}\right)^{\frac{1}{2}}z^p
  $$
  for all $p \in J_n(m)$.
\end{corollary}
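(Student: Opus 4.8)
The plan is to read off the statement from Theorem~\ref{theo:comm-C*-algebra} by tracking the image of the standard basis of $l_2(J_n(m))$ under the isometry $R^*$. First I would note that, by the explicit formula~(\ref{R-adjoin}), the standard basis vector $e_p$ of $l_2(J_n(m))$ (the sequence with $1$ in the $p$-th entry and $0$ elsewhere) satisfies
$$
R^* e_p = \left(\frac{m!}{p!(m-|p|)!}\right)^{\frac{1}{2}} z^p,
$$
so $R^* e_p$ is exactly the $p$-th element of the orthonormal basis~(\ref{basis-Bergman}). Since Theorem~\ref{theo:RR*} tells us that $R^*$ is an isometric isomorphism of $l_2(J_n(m))$ onto $\mA^2_m(\PC)$ with inverse $R|_{\mA^2_m(\PC)}$, this identifies the standard orthonormal basis of $l_2(J_n(m))$ with the basis~(\ref{basis-Bergman}) under $R^*$.

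Next I would rewrite the unitary equivalence of Theorem~\ref{theo:comm-C*-algebra} as the conjugation $T_a = R^*\,\gamma_{a,m}I\,R$ on $\mA^2_m(\PC)$. Concretely, applying $R^*$ on the left of $RT_aR^* = \gamma_{a,m}I$ and using $R^*R = B_m$ from Theorem~\ref{theo:RR*} gives $B_m T_a R^* = R^*\gamma_{a,m}I$; since $T_a = B_m a B_m$ maps into $\mA^2_m(\PC)$ and $B_m$ acts as the identity there, the left side collapses to $T_a R^*$, yielding the intertwining relation $T_a R^* = R^*\gamma_{a,m}I$. Evaluating this on $e_p$ and using that $\gamma_{a,m}I$ is diagonal in the standard basis, I obtain
$$
T_a(R^* e_p) = R^*\big(\gamma_{a,m}(p)\,e_p\big) = \gamma_{a,m}(p)\,R^* e_p.
$$
Substituting the explicit value of $R^* e_p$ from the first step gives precisely the claimed formula.

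I do not expect a genuine obstacle: the entire content is already packaged in Theorems~\ref{theo:RR*} and~\ref{theo:comm-C*-algebra}, and the corollary merely records that a multiplication (hence diagonal) operator, transported through the unitary $R^*$ that carries the standard basis to~(\ref{basis-Bergman}), remains diagonal with the same eigenvalues $\gamma_{a,m}(p)$. The only step deserving a line of care is dropping $R^*R = B_m$ in the intertwining identity, which is legitimate because $T_a$ already has range inside the Bergman space, where $B_m$ restricts to the identity.
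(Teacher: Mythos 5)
Your proposal is correct and takes essentially the same route as the paper: the paper states this corollary without a separate proof, treating it as an immediate consequence of Theorem~\ref{theo:comm-C*-algebra} and the explicit formula~(\ref{R-adjoin}) for $R^*$, which is exactly the conjugation argument you spell out. Your extra care in justifying $B_m T_a = T_a$ (so that $R^*RT_aR^* = \gamma_{a,m}R^*$ collapses to $T_aR^* = R^*\gamma_{a,m}$) is a correct filling-in of the detail the paper leaves implicit.
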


\section{Abelian groups and Lagrangian frames on
  $\PC$}\label{sec:abel-lagr}
In this section, we will discuss the geometric features of separately
radial symbols. Such description will be provided in the context of
both actions of Lie groups and foliated spaces.

Let us denote with $\wA(n)$ the subgroup of diagonal matrices in the
Lie group $\SU(n+1)$, which is obviously a connected Abelian Lie
subgroup of dimension $n$. Correspondingly, we denote $\A(n) =
\lambda(\wA(n))$, which is a connected Abelian Lie subgroup of
$\PSU(n+1)$ of dimension $n$. Recall from Section~\ref{sec:geom-PC}
that $\lambda$ denotes the natural projection map $\SU(n+1)
\rightarrow \PSU(n+1)$. The group $\A(n)$ describes our separately
radial symbols by use of the next result, whose proof follows readly
from the definitions involved.

\begin{lemma}\label{lem:radialgroup}
  Let $\varphi_0 : U_0 \rightarrow \C^n$ be the coordinate chart of $\PC$
  defined in Section~\ref{sec:geom-PC}. Then, $U_0$ is
  $\A(n)$-invariant and induces on $\C^n$ the $\T^n$-action given by
  \begin{eqnarray*}
    \T^n\times\C^n &\rightarrow& \C^n \\
    (t,z) &\mapsto& tz = (t_1 z_1, \dots, t_n z_n). 
  \end{eqnarray*}
  More precisely, if we let $\rho : \T^n \rightarrow \wA(n)$ be the
  isomorphism defined by
  $$
  \rho(t) =
    \begin{pmatrix}
      t_1 & 0 & \dots & 0 & 0 \\
      0 & t_2 & \dots & 0 & 0 \\
      \vdots & \vdots & \ddots & \vdots & \vdots \\
      0 & 0 & \dots & t_n & 0 \\
      0 & 0 & \dots & 0 & \overline{t}_1\dots \overline{t}_n
    \end{pmatrix}
  $$
  then, we have
  $$
  t\varphi_0([w]) = \varphi_0([\rho(t)w]),
  $$
  for every $t \in \T^n$ and $[w] \in U_0$. In particular, a symbol $a
  : \C^n \rightarrow \C$ is separately radial if and only if
  $a\circ\varphi_0$ is $\A(n)$-invariant as a function on $U_0 \subset
  \PC$.
\end{lemma}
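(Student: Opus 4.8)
The plan is to unwind the $\PSU(n+1)$-action of Proposition~\ref{prop-Iso-PSU} in the explicit homogeneous coordinates and then read everything off in the chart $\varphi_0$. First I would record that a typical element of $\wA(n)$ is a diagonal matrix $D=\mathrm{diag}(d_0,\dots,d_n)$ with $|d_j|=1$ and $\prod_{j=0}^n d_j=1$, and that it acts on $\PC$ by $[w]\mapsto[Dw]=[d_0w_0:\dots:d_nw_n]$. Since $d_0\neq 0$, the image $[Dw]$ lies in $U_0$ if and only if $w_0\neq 0$, that is, if and only if $[w]\in U_0$; this gives the $\A(n)$-invariance of $U_0$ at once.

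Next I would transport the action to $\C^n$ through $\varphi_0$. Writing $z_k=w_k/w_0$ for the homogeneous coordinates and using $Dw=(d_0w_0,\dots,d_nw_n)$, a direct substitution gives
\[
\varphi_0([Dw])=\frac{1}{d_0w_0}(d_1w_1,\dots,d_nw_n)=\Bigl(\tfrac{d_1}{d_0}z_1,\dots,\tfrac{d_n}{d_0}z_n\Bigr),
\]
so that $D$ acts on $\C^n$ coordinatewise through the unimodular scalars $d_k/d_0$. I would then verify that, as $D$ ranges over $\wA(n)$, the tuple $(d_1/d_0,\dots,d_n/d_0)$ sweeps out all of $\T^n$: given any $s\in\T^n$ one solves $d_0=c$, $d_k=cs_k$ subject to $c^{\,n+1}\prod_k s_k=1$, which is solvable in $\T$. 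Presenting this coordinatewise action in the parametrization recorded in the statement is then a matter of identifying $\T^n$ with $\wA(n)$ through $\rho$; here one only keeps track of the fact that the action is projective, hence depends solely on the ratios $d_k/d_0$, and that the last entry $\overline t_1\cdots\overline t_n$ of $\rho(t)$ is exactly the normalization forcing $\det=1$, which is what makes $\rho$ take values in $\wA(n)$ and descend under $\lambda$ to an action of $\A(n)$.

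Finally, once the induced action is identified with the standard coordinatewise torus action, the equivalence with separate radiality is purely a description of orbits: the orbit of $z=(t,r)$ is the polytorus $\{(s_1z_1,\dots,s_nz_n):s\in\T^n\}$, which is precisely the set of points sharing the radial data $(|z_1|,\dots,|z_n|)=(r_1,\dots,r_n)$. Thus $a\circ\varphi_0$ is $\A(n)$-invariant exactly when $a$ is constant along these polytori, i.e. when $a(z)=a(r)$, the definition of separately radial; to pass from invariance to radial dependence one rotates each nonzero coordinate $z_k$ to $|z_k|$ by a suitable $s_k$, handling the coordinate hyperplanes $z_k=0$ separately. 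The one step I would treat with care is the middle paragraph---checking that the projective normalization is handled consistently, so that $\rho$ really is an isomorphism $\T^n\to\wA(n)$ and the induced maps exhaust the full coordinatewise $\T^n$-action; the remaining assertions are immediate from the definitions, in agreement with the remark that the proof follows readily.
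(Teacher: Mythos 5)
Your strategy coincides with what the paper intends --- the paper gives no written proof at all, asserting the lemma ``follows readily from the definitions'' --- and the bulk of your verification is correct: the invariance of $U_0$ (since $d_0\neq 0$), the transported action $\varphi_0([Dw])=\bigl((d_1/d_0)z_1,\dots,(d_n/d_0)z_n\bigr)$, the surjectivity of $D\mapsto(d_1/d_0,\dots,d_n/d_0)$ from $\wA(n)$ onto $\T^n$, and the identification of the $\A(n)$-orbits with polytori, which yields the separately radial characterization (including the care at the coordinate hyperplanes).

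However, the one step you postponed (``treat with care'') is exactly where the argument cannot be completed, because the displayed intertwining formula of the lemma is false for the $\rho$ given. By your own computation the scalar acting on $z_k$ is $d_k/d_0$, and for $D=\rho(t)$ one reads off $d_0=t_1$, $d_k=t_{k+1}$ for $1\le k\le n-1$, $d_n=\overline{t}_1\cdots\overline{t}_n$, hence
$$
\varphi_0([\rho(t)w])=\Bigl(\tfrac{t_2}{t_1}\,z_1,\dots,\tfrac{t_n}{t_1}\,z_{n-1},\tfrac{\overline{t}_1\cdots\overline{t}_n}{t_1}\,z_n\Bigr),
$$
which is not $(t_1z_1,\dots,t_nz_n)$; already for $n=1$, $\rho(t)=\mathrm{diag}(t_1,\overline{t}_1)$ gives $\varphi_0([\rho(t)w])=\overline{t}_1^{\,2}z_1$. (The alternative reading of the matrix, with $\overline{t}_1\cdots\overline{t}_n$ acting on the $w_0$-coordinate, gives $t_kt_1\cdots t_n\,z_k$, again not $t_kz_k$.) In fact no continuous homomorphism $\T^n\to\wA(n)$ can intertwine exactly: $d_k/d_0=t_k$ together with $\det D=1$ forces $d_0(t)^{n+1}=(t_1\cdots t_n)^{-1}$, and no continuous character of $\T^n$ satisfies this. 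So the ``More precisely'' clause is an error in the statement itself, which your first computation exposes but which you then glossed over by ``identifying $\T^n$ with $\wA(n)$ through $\rho$.'' What is true --- and what your surjectivity argument actually proves --- is that $t\mapsto(d_k(t)/d_0(t))_k$ is a surjective endomorphism of $\T^n$ whose kernel $\{(\zeta,\dots,\zeta):\zeta^{n+1}=1\}$ is exactly $\rho^{-1}(Z_{n+1}I_{n+1})$; consequently the $\A(n)$-orbits in $\C^n$ are still precisely the polytori, and the ``In particular'' characterization of separately radial symbols stands. For an exact equivariance statement one should instead use the isomorphism $\T^n\to\A(n)$ inverse to $\lambda(D)\mapsto(d_1/d_0,\dots,d_n/d_0)$, which does not lift to $\wA(n)$.
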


Note that a change of coordinates by a biholomorphism of $\PC$ can be
used to obtain from our separately radial symbols an equivalent family
of $C^*$-algebras of Toeplitz operators. Correspondingly, conjugation
by any such biholomorphism provides an Abelian group equivalent to
$\A(n)$. In our search for commutative $C^*$-algebras of Toeplitz
operators this suggests to study the relation of $\A(n)$ with other
possible Abelian groups, particularly those obtained by
conjugations. The following result provides a complete solution to
such problem within the group $\PSU(n+1)$ that defines the connected
component $\Iso_0(\PC)$ of the group of isometries of $\PC$.

\begin{theorem}\label{theo:MASGPC}
  The Lie subgroup $\A(n)$ of $\PSU(n+1)$ satisfies the following
  properties.
  \begin{enumerate}
  \item $\A(n)$ is isomorphic to $\T^n$.
  \item $\A(n)$ is a maximal Abelian subgroup
    $\mathrm{(MASG)}$ of $\PSU(n+1)$; i.e.~if $H$ is a
    connected Abelian proper subgroup of $\PSU(n+1)$ that contains
    $\A(n)$, then $H = \A(n)$.
  \item If $H$ is a connected Abelian Lie subgroup of $\PSU(n+1)$,
    then there exist $g \in \PSU(n+1)$ such that $gHg^{-1} \subset
    \A(n)$.
  \end{enumerate}
\end{theorem}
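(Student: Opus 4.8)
The plan is to deduce all three statements from the structure theory of maximal tori in compact connected Lie groups, once $\A(n)$ has been described explicitly. For item (1), I would note that $\wA(n)$ consists of the diagonal matrices $\mathrm{diag}(s_1,\dots,s_{n+1})$ with $|s_j| = 1$ and $s_1\cdots s_{n+1} = 1$, so the isomorphism $\rho$ of Lemma~\ref{lem:radialgroup} already identifies $\wA(n)$ with $\T^n$. Since the central subgroup $Z_{n+1}I_{n+1}$ consists of the scalar matrices $\zeta I_{n+1}$ with $\zeta^{n+1} = 1$, each of which is diagonal of determinant one, we have $Z_{n+1}I_{n+1} \subset \wA(n)$; hence $\A(n) = \lambda(\wA(n)) = \wA(n)/Z_{n+1}I_{n+1}$ is the quotient of the torus $\wA(n)$ by a finite central subgroup, and is therefore a compact connected Abelian Lie group of dimension $n$, i.e.\ a torus isomorphic to $\T^n$.

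For item (2) the key step is to compute the centralizer of $\A(n)$ in $\PSU(n+1)$ and show it equals $\A(n)$. In $\SU(n+1)$ this is elementary: $\wA(n)$ contains diagonal matrices with pairwise distinct entries, and any matrix commuting with such a matrix is forced to be diagonal, so the centralizer of $\wA(n)$ in $\SU(n+1)$ is $\wA(n)$ itself. To pass to the quotient, I would take $X \in \SU(n+1)$ with $\lambda(X)$ centralizing $\A(n)$; then for every $D \in \wA(n)$ one has $XDX^{-1}D^{-1} \in Z_{n+1}I_{n+1}$, that is $XDX^{-1} = \zeta(D)\,D$ with $\zeta(D)^{n+1} = 1$. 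The map $D \mapsto \zeta(D)$ is continuous from the connected group $\wA(n)$ to the finite group $Z_{n+1}$, hence constant, and its value at $D = I_{n+1}$ is $1$; thus $X$ centralizes $\wA(n)$ and so $X \in \wA(n)$, giving $\lambda(X) \in \A(n)$. Once the centralizer of $\A(n)$ is known to be $\A(n)$, any Abelian subgroup $H$ containing $\A(n)$ must lie in this centralizer, whence $H = \A(n)$; this proves (2), and in particular shows that $\A(n)$ is a maximal torus of $\PSU(n+1)$, since any torus containing it would be an Abelian overgroup.

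For item (3) I would appeal to the classical conjugacy theorem for maximal tori of a compact connected Lie group (see \cite{Helgason}). Given a connected Abelian Lie subgroup $H$, its closure $\overline{H}$ is a closed, connected, Abelian subgroup of the compact group $\PSU(n+1)$, hence a torus, and every torus is contained in some maximal torus $T$. Since all maximal tori of $\PSU(n+1)$ are conjugate and $\A(n)$ is one of them by (2), there exists $g \in \PSU(n+1)$ with $gTg^{-1} = \A(n)$, and then $gHg^{-1} \subseteq g\overline{H}g^{-1} \subseteq gTg^{-1} = \A(n)$, as required.

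I expect the only genuinely delicate point to be the descent of the self-centralizing property from $\SU(n+1)$ to $\PSU(n+1)$ in item (2): a priori the central correction $\zeta(D)$ could depend on $D$, and it is precisely the connectedness of $\wA(n)$ together with the discreteness of $Z_{n+1}$ that forces $\zeta(D) \equiv 1$. The remaining ingredients are either the explicit matrix description of $\A(n)$ or direct citations of the standard theory of maximal tori in compact connected Lie groups.
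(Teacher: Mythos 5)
Your proof is correct, and it takes a genuinely more self-contained route than the paper's. The paper disposes of (2) and (3) almost entirely by citation: it invokes Proposition~4.30 of \cite{Knapp} to identify the connected maximal Abelian subgroups of a compact connected Lie group with its maximal tori, Example~1 on p.~252 of \cite{Knapp} to conclude that $\A(n)$ is such a maximal torus of $\PSU(n+1)$, and Corollary~4.35 of \cite{Knapp} for the conjugacy statement (3). You replace the citation at the heart of (2) with an elementary centralizer computation --- a diagonal matrix with pairwise distinct entries has only diagonal centralizers, and the central correction $\zeta(D)$ arising in the descent from $\SU(n+1)$ to $\PSU(n+1)$ is killed by continuity of $D \mapsto \zeta(D)$ on the connected group $\wA(n)$ --- and this is exactly the delicate point you flagged; it is handled correctly. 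This buys two things the paper does not state: maximality of $\A(n)$ among \emph{all} Abelian subgroups containing it (you never use connectedness or properness of $H$), and independence from Knapp's specific example. Your treatment of (1) is also more careful than the paper's: Lemma~\ref{lem:radialgroup} provides an isomorphism $\rho$ of $\T^n$ onto $\wA(n)$, not onto $\A(n)$, and since $\lambda\circ\rho$ has nontrivial kernel (the diagonally embedded $(n+1)$-th roots of unity) one really does need, as you note, the fact that the quotient of $\T^n$ by a finite subgroup is again an $n$-torus; the paper glosses over this. For (3) your argument --- pass to the closure, which is a torus, embed it in a maximal torus, and conjugate --- is the standard argument underlying the paper's citation of Corollary~4.35; the closure step is worth making explicit, since a connected Abelian Lie subgroup need not be closed. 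In sum: the paper's proof is shorter and leans on standard structure theory, while yours is self-contained modulo only the classical conjugacy theorem for maximal tori, and proves a slightly stronger form of (2).
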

\begin{proof}
  First note that an isomorphism between $\A(n)$ and $\T^n$ was
  already provided in Lemma~\ref{lem:radialgroup}.

  The rest of the statement follows from the fact that $\PSU(n+1)$ is
  a compact connected group of matrices. We explain how to prove the
  remaining claims from the results found in \cite{Knapp}.

  By the well known correspondence between Lie subalgebras and Lie
  subgroups, we conclude that the MASG's of $\PSU(n+1)$ are precisely
  those whose Lie algebra are maximal Abelian subalgebras of the Lie
  algebra of $\PSU(n+1)$. Hence, it follows from Proposition~4.30 of
  \cite{Knapp} that the MASG are precisely the maximal tori in
  $\PSU(n+1)$. We recall that a torus is a group isomorphic to a
  product of circles, and that the maximality here refers to the order
  by inclusion. Hence, Example~1 in page~252 of \cite{Knapp} implies
  that $\A(n)$ is a MASG of $\PSU(n+1)$, which yields (2).

  Finally, the last assertion of the statement follows from
  Corollary~4.35 of \cite{Knapp} and our remark above on the fact that
  the MASG's in $\PSU(n+1)$ are precisely the maximal tori.
\end{proof}

As a consequence, we obtain the following uniqueness result for the
isometry group $\Iso_0(\PC)$. This follows from
Theorem~\ref{theo:MASGPC} and Proposition~\ref{prop-Iso-PSU}.

\begin{theorem}
  A subgroup $H$ of $\Iso_0(\PC)$ is a MASG if and only if there exist
  $\varphi \in \Iso_0(\PC)$ such that $\varphi H \varphi^{-1} = \A(n)$.
\end{theorem}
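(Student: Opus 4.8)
The plan is to transport the whole statement to $\PSU(n+1)$ via the Lie group isomorphism $\Iso_0(\PC) \simeq \PSU(n+1)$ furnished by Proposition~\ref{prop-Iso-PSU}, and then to read off both implications directly from Theorem~\ref{theo:MASGPC}. That isomorphism identifies $\A(n) \subset \PSU(n+1)$ with the corresponding subgroup of $\Iso_0(\PC)$, and it intertwines conjugation in one group with conjugation in the other; so an element $\varphi \in \Iso_0(\PC)$ corresponds to some $g \in \PSU(n+1)$ with $\varphi H \varphi^{-1}$ matching $g g^{-1}$-conjugation of the image of $H$. Hence it suffices to prove the equivalence for subgroups of $\PSU(n+1)$.

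The observation I would isolate first is that inner automorphisms preserve the property of being a MASG. For fixed $g$, the map $x \mapsto g x g^{-1}$ is a group automorphism of $\PSU(n+1)$: it carries connected Abelian subgroups to connected Abelian subgroups and respects inclusions in both directions. Consequently it sends a subgroup that is maximal among connected Abelian subgroups to another such maximal subgroup, so conjugates of MASGs are MASGs. With this in hand the \emph{if} direction is immediate: Theorem~\ref{theo:MASGPC}(2) asserts that $\A(n)$ is a MASG of $\PSU(n+1)$, so any $H$ with $\varphi H \varphi^{-1} = \A(n)$ is a conjugate of a MASG and hence is itself a MASG.

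For the \emph{only if} direction, I would let $H$ be a MASG; in particular $H$ is a connected Abelian subgroup, so Theorem~\ref{theo:MASGPC}(3) yields $g \in \PSU(n+1)$ with $g H g^{-1} \subset \A(n)$. By the observation above $g H g^{-1}$ is again a MASG, while $\A(n)$ is a connected Abelian subgroup containing it; the maximality of $g H g^{-1}$ then forces $g H g^{-1} = \A(n)$, which is exactly the asserted conjugacy (transported back through Proposition~\ref{prop-Iso-PSU} to produce the required $\varphi$). I expect no genuine obstacle here, as the argument is a formal consequence of the results already established; the only points demanding care are the bookkeeping of the isomorphism of Proposition~\ref{prop-Iso-PSU}, so that conjugation by $\varphi \in \Iso_0(\PC)$ matches conjugation by the corresponding $g \in \PSU(n+1)$, and the elementary check that maximality among connected Abelian subgroups is stable under conjugation.
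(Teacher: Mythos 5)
Your proposal is correct and takes essentially the same route as the paper, which proves this theorem simply by citing Theorem~\ref{theo:MASGPC} together with Proposition~\ref{prop-Iso-PSU}. You have merely made explicit the routine details the paper leaves implicit (that conjugation preserves the MASG property, and that maximality upgrades the inclusion $gHg^{-1} \subset \A(n)$ from Theorem~\ref{theo:MASGPC}(3) to equality), so there is nothing to object to.
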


To continue the description of our separately radial symbols on $\PC$
we will use the notion of a foliation and its geometric properties.

We recall the definition of a foliation as found in \cite{QV-Ball2}.
On a smooth manifold $M$ a codimension $q$ foliated chart
is a pair $(\varphi, U)$ given by an open subset $U$ of $M$
and a smooth submersion $\varphi : U \rightarrow V$, where $V$ is an
open subset of $\R^q$. For a foliated chart $(\varphi, U)$ the
connected components of the fibers of $\varphi$ are called the plaques
of the foliated chart. Two codimension $q$ foliated charts
$(\varphi_1,U_1)$ and $(\varphi_2,U_2)$ are called compatible if there
exists a diffeomorphism $\psi_{12} : \varphi_1(U_1\cap U_2)
\rightarrow \varphi_2(U_1\cap U_2)$ such that the following diagram
commutes
\begin{equation}\label{charts-comp}
  \xymatrix{
    &   U_1\cap U_2 \ar[dl]_{\varphi_1} \ar[dr]^{\varphi_2}  & \\
    \varphi_1(U_1\cap U_2) \ar[rr]^{\psi_{12}}    &     &   \varphi_2(U_1\cap
    U_2).
  }
\end{equation}

A foliated atlas on a manifold $M$ is a collection
$\{(\varphi_\alpha,U_\alpha)\}_\alpha$ of foliated charts that are
mutually compatible and such that $M = \bigcup_\alpha
U_\alpha$.

The compatibility of two foliated charts $(\varphi_1,U_1)$ and
$(\varphi_2,U_2)$ is defined so that it ensures that, when restricted
to $U_1\cap U_2$, both submersions $\varphi_1$ and $\varphi_2$ have
the same plaques. This implies that the following is an equivalence
relation in $M$.
\begin{eqnarray*}
  x\sim y &\iff& \mbox{there is a sequence of plaques }
  (P_k)_{k=0}^l \mbox{ for foliated charts } \\
  && (\varphi_k, U_k)_{k=0}^l,  \mbox{respectively, of the foliated
    atlas, such that } \\
  && x\in P_0,\ y\in P_l, \mbox{ and }
  P_{k-1}\cap P_k\neq\phi \mbox{ for every }
  k=1,\dots,l
\end{eqnarray*}
The equivalence classes are submanifolds of $M$ of
dimension $\dim(M)-q$.

\begin{definition}\label{def-foliation}
  A foliation $\F$ on a manifold $M$ is a partition of
  $M$ that is given by the family of equivalence classes of
  the relation of a foliated atlas. The classes are called the leaves
  of the foliation.
\end{definition}

If $\F$ is a foliation, we denote with $T\F$ the space of tangent
vectors to the leaves of $\F$. The space $T\F$ is called the tangent
bundle of the foliation. We note that $T\F$ is a vector subbundle of
the tangent bundle to the ambient manifold. If $E$ is a vector
subbundle of the tangent bundle of the ambient manifold, then we will
say that $E$ is integrable if it is the tangent bundle of a
foliation. We observe that not every vector bundle is
integrable. Also, a vector bundle $E$ is integrable if and only if
through every point in the ambient manifold there is a submanifold $N$
such that $TN_p = E_p$ for every $p \in N$.

For the geometric description of our separately radial symbols we need to
consider Lagrangian foliations, i.e.~those for which the leaves are
Lagrangian submanifolds. We recall that a submanifold $N$ of a
symplectic manifold $M$ is called Lagrangian if the tangent space
$T_pN$ is a Lagrangian subspace of $T_pM$ for every $p \in N$.

We will also consider further properties for submanifolds which we
collect in the next definition. We refer to \cite{KNII} for more
details.

\begin{definition}
  Let $M$ be a Riemannian manifold with Levi-Civita connection
  $\nabla$, and $N$ a submanifold of $M$.
  \begin{enumerate}
  \item $N$ is called complete if every geodesic in $N$ can be
    defined for every value of $\R$ so that it still lies in $N$.
  \item $N$ is called flat if it has vanishing sectional curvature for
    the metric inherited from $M$.
  \item $N$ is called parallel if $\nabla \Pi = 0$, where $\Pi$ is the
    second fundamental form of $N$ with respect to $M$.
  \item $N$ is called totally geodesic if every geodesic in $N$ is a
    geodesic in $M$ as well.
  \end{enumerate}
\end{definition}

Given these definitions, the following provides the main object that
will be used to geometrically describe the separately radial symbols on $\PC$.

\begin{definition} 
  A Lagrangian frame on an open connected subset $D$ of $\PC$ is a
  pair of foliations $(\F_1, \F_2)$ that satisfy the following
  properties:
  \begin{enumerate}
  \item The leaves of $\F_1$ are complete flat parallel Lagrangian
    submanifolds.
  \item The leaves of $\F_2$ are totally geodesic Lagrangian
    submanifolds.
  \item At their intersection, every leaf of $\F_1$ is perpendicular
    to every leaf of $\F_2$.
  \end{enumerate}
\end{definition}

It has been shown that Lagrangian frames on the $n$-dimensional
complex unit ball provide the natural geometric setup to study symbols
generating commutative $C^*$-algebras of Toeplitz operators (see
\cite{QV-Ball1} and \cite{QV-Ball2}). In our case of the separately
radial symbols for $\PC$ we will show that a similar phenomenon takes
place. We show that the geometry of the level sets of separately
radial symbols yield a Lagrangian frame.

First, we present the full description of the complete flat parallel
submanifolds of $\PC$ as found in \cite{Naitoh-Takeuchi}.

Let us denote with $S^1(r)$ the circle in $\C$ with radius $r > 0$ and
centered at the origin. For every $r \in S^n\cap\R^{n+1}_+$, the torus
$$
\widehat{M}(r) = S^1(r_1)\times \dots \times S^1(r_{n+1}) \subset
\C^{n+1}
$$ 
is clearly contained in $S^{2n+1}$. As above, we let $\pi : S^{2n+1}
\rightarrow \PC$ denote the Hopf fibration of $\PC$. We will also
denote $M(r) = \pi(\widehat{M}(r))$ for every $r \in
S^n\cap\R^{n+1}_+$.  The following result is a consequence of
Theorems~2.1 and 3.1 from \cite{Naitoh-Takeuchi}.

\begin{theorem} \label{thm-flat-parallel}
  For every $r \in S^n\cap\R^{n+1}_+$, the set $M(r)$ is a
  connected complete flat parallel Lagrangian submanifold of
  $\PC$. Furthermore, if $M$ is any connected complete flat parallel
  Lagrangian submanifold of $\PC$, then there exists $\varphi \in
  \Iso_0(\PC)$ such that $\varphi(M) = M(r)$ for some $r \in
  S^n\cap\R^{n+1}_+$.
\end{theorem}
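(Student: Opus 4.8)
The plan is to prove the two assertions in turn: first that each $M(r)$ enjoys the listed properties, then the classification up to $\Iso_0(\PC)$. For the first assertion I would work through the Hopf fibration $\pi$ and symplectic reduction. The torus $\widehat{M}(r) = S^1(r_1)\times\dots\times S^1(r_{n+1})$ is invariant under the diagonal $S^1$-action $w\mapsto e^{i\theta}w$, which is exactly the Hopf fiber direction, so $\pi$ restricts to a submersion $\widehat{M}(r)\to M(r)$ with one-dimensional fibers; hence $M(r)$ is connected (the continuous image of a torus) and has real dimension $n$. The tangent space of $\widehat{M}(r)$ at a point $w$ is spanned by the rotational directions $i w_j e_j$, and the standard form $\omega_0 = \mathrm{Im}\langle\cdot,\cdot\rangle$ on $\C^{n+1}$ satisfies $\omega_0(i w_j e_j, i w_k e_k)=0$ since $\langle i w_j e_j, i w_k e_k\rangle = r_j^2\,\delta_{jk}$ is real; thus $\widehat{M}(r)$ is $\omega_0$-isotropic, and as the Fubini--Study form $\omega$ is obtained from $\omega_0$ by reduction along the Hopf fibers, $M(r)$ is $\omega$-isotropic of dimension $n=\tfrac12\dim_\R\PC$, i.e.\ Lagrangian. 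Flatness is immediate, since $M(r)$ is the image under the Riemannian submersion $\pi$ of the flat product torus $\widehat{M}(r)$, and the quotient of a flat torus by the isometric Hopf circle stays flat; completeness holds because $M(r)$ is compact, being the $\A(n)$-orbit of $[r]$ by Lemma~\ref{lem:radialgroup}.

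For the parallel property $\nabla\Pi=0$ I would show that $M(r)$ is a symmetric submanifold. At the base point $[r]$, coordinatewise conjugation $w\mapsto\overline{w}$ on $\C^{n+1}$ induces an isometry of $\PC$ that preserves $M(r)$, fixes $[r]$, acts as $-\mathrm{id}$ on $T_{[r]}M(r)$ (spanned by the rotational directions $i r_j e_j$), and as $+\mathrm{id}$ on the normal space (spanned by the real radial directions). Transporting this reflection by the transitive $\A(n)$-action produces such a reflection at every point of $M(r)$, so $M(r)$ is a symmetric submanifold; since complete symmetric submanifolds of a symmetric space are exactly the complete parallel ones, $M(r)$ is parallel. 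This finishes the first assertion.

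For the converse, let $M$ be an arbitrary connected complete flat parallel Lagrangian submanifold. The crux is to identify $M$ as an orbit of a maximal torus of $\Iso_0(\PC)$, and this is where I would invoke the rigidity theory of parallel submanifolds of compact symmetric spaces (Theorems~2.1 and 3.1 of \cite{Naitoh-Takeuchi}): a complete parallel submanifold is extrinsically homogeneous, realized as the orbit of a subgroup of the isometry group, and the flat Lagrangian hypotheses force this subgroup to act as a torus of the maximal dimension $n$. Granting this, Theorem~\ref{theo:MASGPC}(3), together with the identification $\Iso_0(\PC)\simeq\PSU(n+1)$ from Proposition~\ref{prop-Iso-PSU}, furnishes $\varphi\in\Iso_0(\PC)$ conjugating the torus into $\A(n)$, so $\varphi(M)$ is an $\A(n)$-orbit. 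A direct reading of the $\A(n)$-action shows its $n$-dimensional orbits are precisely the tori $M(r)$ with $r\in S^n\cap\R^{n+1}_+$, the parameter $r$ recording the normalized moduli $r_j=|w_j|/\|w\|$ of the homogeneous coordinates; hence $\varphi(M)=M(r)$ for some such $r$. I expect the rigidity input — using flatness and $\nabla\Pi=0$ to exclude all extrinsically homogeneous Lagrangians other than maximal-torus orbits — to be the genuine obstacle, the remaining steps being group theory already in hand.
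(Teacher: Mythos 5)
Your proposal is correct, but it follows a genuinely different route from the paper on the first half of the statement: the paper gives no argument at all for this theorem, stating both assertions as a direct consequence of Theorems~2.1 and 3.1 of Naitoh--Takeuchi, whereas you verify the first assertion from scratch. Your verification is sound: connectedness and the Lagrangian property via the Hopf fibration (isotropy of $\widehat{M}(r)$ for the standard form on $\C^{n+1}$, plus the fact that the Fubini--Study form pulls back to its restriction on $S^{2n+1}$), completeness from compactness, and parallelism via extrinsic symmetries --- conjugation $[w]\mapsto[\overline{w}]$ is an isometry fixing $[r]$ acting as $-\mathrm{id}$ on $T_{[r]}M(r)$ and $+\mathrm{id}$ on the normal space, transported to every point by the transitive $\A(n)$-action, after which the standard parity argument kills $\nabla\Pi$. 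One step needs more care than you give it: ``the quotient of a flat torus by the isometric Hopf circle stays flat'' is not a general principle, since Riemannian submersions can only increase curvature by O'Neill's formula; it holds here because the Hopf circle acts on $\widehat{M}(r)$ by group translations, so the horizontal distribution is spanned by commuting invariant fields, the O'Neill tensor vanishes, and the quotient is again a flat torus (with an invariant metric on an abelian group). For the converse, you lean on exactly the same external input as the paper --- the Naitoh--Takeuchi rigidity --- repackaged as extrinsic homogeneity plus conjugation of the resulting torus into $\A(n)$ via Theorem~\ref{theo:MASGPC}(3) and the identification of the $n$-dimensional $\A(n)$-orbits with the tori $M(r)$; this last group-theoretic reduction is essentially what the paper carries out separately in Theorem~\ref{thm-A-orbits}. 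So your treatment is no less rigorous than the paper's: it replaces part of the citation with honest geometry, buys a self-contained proof of the existence half, and (as you acknowledge) leaves the classification half as a citation, exactly where the paper leaves the entire theorem.
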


Let us denote
$$
\PC_0 = \{ [z_0, \dots, z_n] \in \PC : z_j \not= 0 \mbox{ for every }
j = 0, \dots, n \}.
$$
Clearly, $\PC_0$ is a connected open subset of $\PC$ which is conull
and dense as well. Also, the action of $\A(n)$ clearly preserves
$\PC_0$ and restricted to this set it is free. In other words, if for
some $p \in \PC_0$ and $g \in \A(n)$ we have $gp = p$, then $g = e$
(the identity element).

The following result provides a characterization of the flat parallel
Lagrangian submanifolds in terms of the MASG $\A(n)$ of $\Iso_0(\PC)$.

\begin{theorem} 
  \label{thm-A-orbits} If $p \in \PC_0$, then the orbit $\A(n) p$ is a
  connected complete flat parallel Lagrangian submanifold of
  $\PC$. Conversely, for every connected complete flat parallel
  Lagrangian submanifold $M$ of $\PC$ there exists $p \in \PC_0$ such
  that $M = \A(n) p$.
\end{theorem}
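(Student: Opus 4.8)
The plan is to prove both implications by passing through the explicit torus description $M(r)$ of Theorem~\ref{thm-flat-parallel} together with a direct computation of the $\A(n)$-orbits in homogeneous coordinates. For the forward implication, I would first compute $\A(n)p$ explicitly: writing $p = [w]$ with $w = (w_0, \dots, w_n)$ and all $w_j \neq 0$ (so $p \in \PC_0$), a general element of $\wA(n)$ has the form $\mathrm{diag}(t_0, \dots, t_n)$ with $|t_j| = 1$ and $t_0 \cdots t_n = 1$. Acting on $w$, then rescaling the homogeneous coordinate by $t_0^{-1}$ and setting $s_j = t_j t_0^{-1}$, gives
\[
\A(n)p = \{\, [w_0, s_1 w_1, \dots, s_n w_n] : s_1, \dots, s_n \in \T \,\},
\]
where each $s_j$ ranges freely over $\T$ because the single constraint $t_0 \cdots t_n = 1$ is absorbed by the projective rescaling. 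Putting $r = (|w_0|, \dots, |w_n|)/\|w\| \in S^n \cap \R^{n+1}_+$, the right-hand side consists of exactly the projective classes of those points of $S^{2n+1}$ whose $j$-th coordinate traverses the circle of radius $|w_j|$, which is precisely $M(r) = \pi(\widehat{M}(r))$. Thus $\A(n)p = M(r)$, and Theorem~\ref{thm-flat-parallel} immediately yields that $\A(n)p$ is a connected complete flat parallel Lagrangian submanifold. The same computation shows that, as $r$ ranges over $S^n \cap \R^{n+1}_+$, the submanifolds $M(r)$ are exactly the $\A(n)$-orbits meeting $\PC_0$.

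For the converse, I would feed a connected complete flat parallel Lagrangian submanifold $M$ into Theorem~\ref{thm-flat-parallel} to obtain $\varphi \in \Iso_0(\PC)$ and $r \in S^n \cap \R^{n+1}_+$ with $\varphi(M) = M(r)$, and then rewrite $M(r) = \A(n)q$ with $q = [r] \in \PC_0$ using the forward computation. Conjugating gives
\[
M = \varphi^{-1}(\A(n)q) = \big( \varphi^{-1}\A(n)\varphi \big)\big( \varphi^{-1}q \big),
\]
which exhibits $M$ as an orbit, through $p_0 = \varphi^{-1}q \in \PC_0$, of the subgroup $\varphi^{-1}\A(n)\varphi$; the latter is again a MASG of $\Iso_0(\PC)$ by Theorem~\ref{theo:MASGPC}.

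The main obstacle is the final passage from this orbit of the conjugate MASG to a genuine $\A(n)$-orbit, that is, producing $p \in \PC_0$ with $M = \A(n)p$ rather than only establishing that $M$ is congruent to such an orbit. I would resolve this by refining the choice of $\varphi$ so that it normalizes $\A(n)$: since $\A(n)$ is a maximal torus and all MASGs are conjugate (Theorem~\ref{theo:MASGPC}), the stabilizer in $\Iso_0(\PC)$ of the family $\{M(r)\}$ of $\A(n)$-orbits through $\PC_0$ is exactly the normalizer $N(\A(n))$, and the freedom available in Theorem~\ref{thm-flat-parallel} lets one take $\varphi \in N(\A(n))$. Then $\varphi^{-1}\A(n)\varphi = \A(n)$, the displayed identity becomes $M = \A(n)\,\varphi^{-1}q$, and setting $p = \varphi^{-1}q \in \PC_0$ gives the asserted equality. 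Finally, using that $\A(n)$ acts freely on $\PC_0$, the point $p$ is the unique element of $M \cap \PC_0$ determining $M$, which completes the converse.
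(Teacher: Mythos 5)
Your forward direction coincides with the paper's: the same explicit computation in homogeneous coordinates showing $\A(n)p = M(r)$ for $r_j = |z_j|$ (your accounting of how the constraint $t_0\cdots t_n = 1$ is absorbed by projective rescaling is a correct, slightly more detailed version of what the paper leaves implicit), followed by an appeal to Theorem~\ref{thm-flat-parallel}. The first half of your converse is also the paper's argument verbatim: Theorem~\ref{thm-flat-parallel} gives $\varphi(M) = M(r) = \A(n)q$.

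The gap is your final ``normalizer'' step, and it is not repairable, because the literal converse you are trying to reach is false. For $n=1$, $\PP^1(\C)$ is the round sphere and $\A(1)$ is the circle of rotations about a fixed axis, so the $\A(1)$-orbits through $\PC_0$ are exactly the circles of latitude for that axis; but \emph{any} circle in $S^2$ (about any axis) is a connected complete flat parallel Lagrangian submanifold, being one-dimensional (hence Lagrangian and flat), closed (hence complete), and of constant geodesic curvature (hence parallel). A tilted circle is congruent to some $M(r)$ yet is not an $\A(1)$-orbit. Concretely, your claim that ``the freedom available in Theorem~\ref{thm-flat-parallel} lets one take $\varphi \in N(\A(n))$'' fails: the set of isometries $\varphi$ with $\varphi(M) = M(r')$ for some $r'$ is a right coset of the stabilizer of the family $\{M(r)\}$, and (granting your identification of that stabilizer with $N(\A(n))$) this coset meets $N(\A(n))$ precisely when $M$ is already an $\A(n)$-orbit --- so your refinement is available exactly when there is nothing to prove. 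What your correct first half establishes is the congruence form: there exist $\varphi \in \Iso_0(\PC)$ and $p \in \PC_0$ with $\varphi(M) = \A(n)p$. This is in fact all the paper's own proof establishes (it stops at $\varphi(M) = \A(n)p$), and it is the form actually invoked later, in the proof of Theorem~\ref{thm-Lag-frame-classification}; the statement of Theorem~\ref{thm-A-orbits} overstates its converse, and you correctly sensed the mismatch but resolved it in the wrong direction --- the statement, not the proof, should be weakened. A small additional slip: your closing remark that freeness of the $\A(n)$-action makes $p$ ``the unique element of $M \cap \PC_0$'' is wrong, since the whole orbit lies in $\PC_0$ and every one of its points serves as $p$; freeness gives uniqueness of the group element moving one orbit point to another, not of the base point.
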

\begin{proof}
  Let $p \in \PC_0$ be given and let $r_j = |z_j|$, for $j = 0, \dots,
  n$, where $p = [z_0, \dots, z_n]$. Without loss of generality, we
  can assume $z \in S^{2n+1}$ so that in particular $r = (r_0, \dots,
  r_n) \in S^n \cap \R^{n+1}_+$.
  
  By the definition of $\A(n)$ and the $\PSU(n+1)$-action on $\PC$, it
  follows that
  $$
  \A(n) p = \{ [t_0 r_0, \dots, t_n r_n] : t_j \in S^1, j = 0, \dots,
  n\}.
  $$
  Hence, we have $\A(n) p = M(r)$ and so
  Theorem~\ref{thm-flat-parallel} implies that $\A(n) p$ is a
  connected complete flat parallel Lagrangian submanifold of $\PC$. 

  Let us now assume that $M$ is a connected complete flat parallel
  Lagrangian submanifold of $\PC$. By Theorem~\ref{thm-flat-parallel}
  there exist $\varphi \in \Iso_0(\PC)$ and $r \in S^n\cap\R^{n+1}_+$
  such that $\varphi(M) = M(r)$. But the above computation shows 
  that $M(r) = \A(n) p$ for $p = [r_0, \dots, r_n] \in \PC$, thus
  completing the proof.
\end{proof}

Let us denote with $\OO$ the set of $\A(n)$-orbits in $\PC_0$. The
next result shows that $\OO$ allows to obtain a Lagrangian frame of
$\PC$ defined on $\PC_0$.

\begin{theorem}\label{thm-A-Lag-frame}
  The collection $\OO$ of $\A(n)$-orbits on $\PC_0$ satisfies the
  following properties:
  \begin{enumerate}
  \item $\OO$ is a foliation whose leaves are complete flat parallel
    Lagrangian submanifolds of $\PC$.
  \item The orthogonal complement $T\OO^\perp$ of $T\OO$ is
    integrable and the leaves of the associated foliation $\OO^\perp$
    are totally geodesic Lagrangian submanifolds of $\PC$.
  \end{enumerate}
  In particular, the pair $(\OO, \OO^\perp)$ defines a Lagrangian
  frame on the open subset $\PC_0$ of $\PC$.
\end{theorem}
\begin{proof}
  We already noted that the $\A(n)$-action is free and preserves the
  Riemannian metric of $\PC$. Hence, $\OO$ defines a foliation by
  Proposition~6.7 from \cite{QV-Ball2}. Furthermore, by
  Theorem~\ref{thm-A-orbits} the leaves of $\OO$ are complete flat
  parallel Lagrangian submanifolds. This establishes (1).

  Next we prove that $T\OO^\perp$ is integrable. Since the leaves of
  $\OO$ are Lagrangian, so are the fibres of the vector bundle
  $T\OO^\perp$. In particular, we have $T\OO^\perp = i T\OO$. For a
  given $p \in \PC_0$ let us consider the space
  $$
  N(p) = \{ [r_0 z_0, \dots, r_n z_n] : r_j \in \R_+ \mbox{ for every
  } j = 0, \dots, n \}
  $$
  where $p = [z_0, \dots, z_n]$. It is clear that $N(p)$ is a
  submanifold of $\PC$ and that $T_q N(p) = i T_q\OO$ for every $q \in
  N$. This proves the integrability of $T\OO^\perp$.

  Finally, that the leaves of $\OO^\perp$ are totally geodesic is a
  direct consequence of Proposition~6.9 from \cite{QV-Ball2}.
\end{proof}

The following result shows that every Lagrangian frame is, up to an
isometry, the Lagrangian frame $(\OO, \OO^\perp)$ defined above, and
thus it is given by our separately radial symbols.

\begin{theorem}\label{thm-Lag-frame-classification} 
  If $(\F_1, \F_2)$ is a Lagrangian frame defined in an open connected
  subset $U$ of $\PC$, then there exist $\varphi \in \Iso_0(\PC)$ of
  $\PC$ such that:
  \begin{enumerate}
    \item $\varphi(U) \subset \PC_0$.
    \item Every leaf of $\varphi(\F_1)$ is a leaf of $\OO$.
    \item Every leaf of $\varphi(\F_2)$ is an open subset of a leaf of
      $\OO^\perp$.
  \end{enumerate}
\end{theorem}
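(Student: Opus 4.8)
The plan is to reduce the statement to matching a single distribution, align one leaf of $\F_1$ with a leaf of $\OO$ by an isometry, and then propagate that agreement across all of $U$ using the rigidity of the totally geodesic leaves of $\F_2$.

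First I would exploit the K\"ahler structure to observe that a Lagrangian frame is completely determined by its first foliation. Since each $T_p\F_1$ is Lagrangian, the identity $g(J\cdot,\cdot)=\omega(\cdot,\cdot)$ gives $(T_p\F_1)^\perp = J(T_p\F_1)$, an $n$-dimensional subspace; the perpendicularity condition (3) together with the fact that $T_p\F_2$ is itself Lagrangian then forces $T\F_2 = J\,T\F_1 = (T\F_1)^\perp$. The same identity $T\OO^\perp = J\,T\OO$ holds for the canonical frame (as already used in the proof of Theorem~\ref{thm-A-Lag-frame}). Consequently, it suffices to produce $\varphi\in\Iso_0(\PC)$ with $\varphi(U)\subset\PC_0$ and $T\varphi(\F_1)=T\OO$ throughout $\varphi(U)$; claims (2) and (3) then follow by reading off the integral manifolds of the coinciding distributions, with (2) giving full $\OO$-leaves and (3) giving open pieces of $\OO^\perp$-leaves.

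Next I would align one leaf. Choosing any leaf $L_0$ of $\F_1$, which by hypothesis is a connected complete flat parallel Lagrangian submanifold, Theorem~\ref{thm-flat-parallel} yields $\varphi\in\Iso_0(\PC)$ with $\varphi(L_0)=M(r_0)$, and the first part of Theorem~\ref{thm-A-orbits} identifies $M(r_0)=\A(n)p_0$ as a leaf of $\OO$ lying in $\PC_0$. Replacing $(\F_1,\F_2)$ by its image under $\varphi$, I may therefore assume $L_0$ itself is already a leaf of $\OO$. The heart of the argument is then the propagation through the totally geodesic foliation $\F_2$. Along $L_0$ one has $T\F_2 = J\,TL_0 = J\,T\OO = T\OO^\perp$, so for each $q\in L_0$ the leaf of $\F_2$ through $q$ and the totally geodesic real projective space $\overline{N(q)}\cong\PP^n(\R)$ extending the $\OO^\perp$-leaf $N(q)$ share their tangent space at $q$. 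Since a totally geodesic submanifold is determined near a point by its tangent space there, the $\F_2$-leaf through $q$ is an open piece of $\overline{N(q)}$. As $q$ ranges over $L_0$ these pieces sweep out a neighborhood $W$ of $L_0$; using $\overline{N(q')}=\overline{N(q)}$ for $q'\in\overline{N(q)}\cap\PC_0$ one obtains $T\F_2=T\OO^\perp$, hence $T\F_1=T\OO$, throughout $W$.

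Finally I would upgrade this local agreement to the whole connected set $U$ by an open--closed argument on the locus where the two Lagrangian distributions coincide. Openness is clear; for closedness I would use that, on the agreement locus, a leaf of $\F_1$ is an integral manifold of $T\OO$, so it lies in a single $\OO$-leaf $\A(n)p\subset\PC_0$, and completeness of the leaf together with the rigidity of complete flat parallel Lagrangian submanifolds from Theorem~\ref{thm-flat-parallel} forces it to be that full orbit. This simultaneously keeps $U$ inside $\PC_0$ and shows the agreement locus is closed. I expect this global propagation --- and in particular ruling out that a leaf of $\F_2$ escapes $\PC_0$ along the real projective space $\overline{N(q)}$ --- to be the main obstacle; the rigidity of totally geodesic submanifolds, combined with the classification in Theorem~\ref{thm-flat-parallel}, is the mechanism that makes the extension go through.
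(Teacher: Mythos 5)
Your proposal follows essentially the same route as the paper's proof: align one leaf of $\F_1$ with an $\A(n)$-orbit via Theorems~\ref{thm-flat-parallel} and \ref{thm-A-orbits}, use the rigidity of totally geodesic submanifolds to identify the $\F_2$-leaves with open pieces of the $\OO^\perp$-leaves, and then recover (2) and (1) from the resulting equality of the two orthogonal Lagrangian distributions. The main difference is one of care rather than of method: your explicit open--closed propagation over the connected set $U$ (and your flagging of the possibility that an $\F_2$-leaf escapes $\PC_0$ along $\overline{N(q)}\cong\PP^n(\R)$) fills in a globalization step that the paper's proof leaves implicit, since the paper only treats the $\F_2$-leaves that meet the single chosen leaf $L$.
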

\begin{proof}
  Let $L$ be a leaf of $\F_1$. Hence, $L$ is a connected complete flat
  parallel Lagrangian submanifold of $\PC$. By
  Theorem~\ref{thm-A-orbits}, there exist $\varphi \in \Iso_0(\PC)$
  and $p \in \PC_0$ such that $\varphi(L) = \A(n) p$. Let us consider
  the image under the exponential map $\exp$ of the normal bundle
  $N$ to $\A(n)p$. Then, for every $q \in \A(n)p$ the set $\exp(N_q)$
  is the largest totally geodesic submanifold of $\PC$ perpendicular
  to $\A(n)p$ at $q$. Since $(\F_1,\F_2)$ is a Lagrangian frame, it
  follows that for $L'$ the leaf of $\F_2$ through $\varphi^{-1}(q)$
  we have
  $$
  \varphi(L') \subset \exp(N_q).
  $$
  Since $(\OO, \OO^\perp)$ is a Lagrangian frame as well, we conclude
  that $\exp(N_q)$ is a leaf of $\OO^\perp$. This proves (3). But
  then (2) follows since the leaves of $\F_1$ (resp. $\OO$) are the
  integral submanifolds of the orthogonal complement of $T\F_1$
  (resp. $T\OO$). 
  
  Finally (1) follows from the fact that $U$ is the union of the
  leaves of $\F_1$.
\end{proof}

We use the previous results to prove that every family of symbols
associated to a Lagrangian frame is, up to a biholomorphism, a subset
of our separately radial symbols.

\begin{theorem}\label{theo:lagrframes-radialsymbols}
  For a subspace $\mathcal{A}$ of $C^\infty(\PC)$ the following
  conditions are equivalent.
  \begin{enumerate}
  \item There is a Lagrangian frame $(\F_1,\F_2)$ defined in a
    connected open conull subset $U$ of $\PC$ such that if $a \in
    \mathcal{A}$, then every level set of $a$ is saturated with
    respect to the foliation $\F_1$, i.e., every such level set is a
    union of leaves of $\F_1$.
  \item There exist $\varphi \in \Iso_0(\PC)$ such that $\mathcal{A}
    \subset \varphi^*(\mathcal{A}_{\A(n)}) = \{a\circ\varphi : a \in
    \mathcal{A}_{\A(n)}\}$, where $\mathcal{A}_{\A(n)}$ is the
    subspace of $C^\infty(\PC)$ consisting of $\A(n)$-invariant
    functions.
  \end{enumerate}
\end{theorem}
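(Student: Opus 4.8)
The plan is to prove the two implications separately, using Theorems~\ref{thm-A-orbits}, \ref{thm-A-Lag-frame} and especially the classification in Theorem~\ref{thm-Lag-frame-classification} as the bridge between an abstract Lagrangian frame and the canonical frame $(\OO, \OO^\perp)$ coming from the $\A(n)$-action. For the implication $(2)\Rightarrow(1)$ I would transport the canonical frame by the given isometry. Given $\varphi\in\Iso_0(\PC)$ with $\mathcal{A}\subset\varphi^*(\mathcal{A}_{\A(n)})$, set $U=\varphi^{-1}(\PC_0)$ and $\F_1=\varphi^{-1}(\OO)$, $\F_2=\varphi^{-1}(\OO^\perp)$. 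Since $\varphi$ is a holomorphic isometry it preserves the Levi-Civita connection, geodesics, the second fundamental form, sectional curvature and the symplectic form $\omega$; hence it carries complete flat parallel Lagrangian leaves and totally geodesic Lagrangian leaves to leaves of the same type and preserves orthogonality, so $(\F_1,\F_2)$ is a Lagrangian frame on $U$. Moreover $U$ is connected, open and conull because $\PC_0$ is so and $\varphi$ is a measure-preserving diffeomorphism. Finally, writing $a=b\circ\varphi$ with $b\in\mathcal{A}_{\A(n)}$, each level set satisfies $a^{-1}(c)\cap U=\varphi^{-1}(b^{-1}(c)\cap\PC_0)$; since $b^{-1}(c)\cap\PC_0$ is a union of $\A(n)$-orbits, i.e.\ of leaves of $\OO$ by Theorem~\ref{thm-A-orbits}, its $\varphi^{-1}$-image is a union of leaves of $\F_1$. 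This is exactly the required saturation, giving $(1)$.

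For the converse $(1)\Rightarrow(2)$ I would invoke Theorem~\ref{thm-Lag-frame-classification} on the fixed frame $(\F_1,\F_2)$ to produce a single $\varphi\in\Iso_0(\PC)$ with $\varphi(U)\subset\PC_0$ and such that every leaf of $\varphi(\F_1)$ is a leaf of $\OO$. The key observation is that, since the leaves of $\varphi(\F_1)$ partition $\varphi(U)$ and each such leaf is a full $\A(n)$-orbit (a leaf of $\OO$, by Theorem~\ref{thm-A-orbits}), the set $\varphi(U)$ is $\A(n)$-saturated; being the image of the conull set $U$ under the measure-preserving $\varphi$, it is also conull. Now fix $a\in\mathcal{A}$ and set $b=a\circ\varphi^{-1}\in C^\infty(\PC)$. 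By hypothesis $a$ is constant on each leaf of $\F_1$, so $b$ is constant on each leaf of $\varphi(\F_1)$, and hence on each $\A(n)$-orbit contained in $\varphi(U)$.

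To upgrade this orbitwise constancy to genuine $\A(n)$-invariance of $b$ on all of $\PC$, I would fix $g\in\A(n)$ and consider the continuous function $x\mapsto b(gx)-b(x)$ on $\PC$. Because $\varphi(U)$ is $\A(n)$-saturated and $b$ is orbit-constant there, this function vanishes on $\varphi(U)$; since $\varphi(U)$ is conull and therefore dense, it vanishes identically, so $b(gx)=b(x)$ for all $x$ and all $g$, i.e.\ $b\in\mathcal{A}_{\A(n)}$. As $\varphi$ depends only on the frame $(\F_1,\F_2)$ and not on the individual symbol $a$, the same $\varphi$ serves every $a\in\mathcal{A}$, yielding $\mathcal{A}\subset\varphi^*(\mathcal{A}_{\A(n)})$, which is $(2)$.

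The step I expect to demand the most care is this last promotion from orbitwise constancy on the conull saturated set $\varphi(U)$ to global invariance on $\PC$. It rests on two points that must be secured from the classification theorem rather than assumed: that $\varphi(U)$ is genuinely $\A(n)$-saturated, so that $gx$ remains in the same $\varphi(\F_1)$-leaf as $x$ whenever $x\in\varphi(U)$, and that $\varphi(U)$ is dense, so that the closed invariance relation extends across the measure-zero complement $\PC\setminus\varphi(U)$ by continuity. Both hinge on Theorem~\ref{thm-Lag-frame-classification} identifying the $\F_1$-leaves with \emph{full} $\A(n)$-orbits rather than mere open pieces of orbits; this is precisely what makes the otherwise delicate extension of invariance go through automatically.
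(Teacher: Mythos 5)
Your proof is correct and takes essentially the same route as the paper: the implication $(2)\Rightarrow(1)$ by transporting the canonical frame $(\OO,\OO^\perp)$ through the isometry (the paper cites Theorem~\ref{thm-A-Lag-frame} plus the isometry-invariance of Lagrangian frames), and $(1)\Rightarrow(2)$ via Theorem~\ref{thm-Lag-frame-classification} followed by a density argument using that conull sets are dense. Your explicit checks --- that $\varphi(U)$ is $\A(n)$-saturated because the classification gives \emph{full} leaves of $\OO$, and the continuity argument for $x\mapsto b(gx)-b(x)$ --- simply spell out what the paper compresses into ``the result follows by the density of $U$.''
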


\begin{proof}
  That (2) implies (1) is the content of Theorem~\ref{thm-A-Lag-frame}
  together with (the obvious) invariance of Lagrangian frames with
  respect to elements in $\Iso_0(\PC)$.

  To prove that (1) implies (2) we use
  Theorem~\ref{thm-Lag-frame-classification}. From this result it
  follows that there exist $\varphi \in \Iso_0(\PC)$ such that the
  Lagrangian frame $(\F_1,\F_2)$ is mapped under $\varphi^{-1}$ to a
  restriction of $(\OO,\OO^\perp)$. Hence, for every $a \in
  \mathcal{A}$, the level subsets of $a \circ \varphi$ are saturated
  with respect to the leaves of the foliation $\OO$ on
  $\varphi^{-1}(U)$. This implies that, for every such $a$, the
  function $a \circ \varphi$ is $\A(n)$-invariant on
  $\varphi^{-1}(U)$.  Hence, the result follows by the density of $U$
  that comes from the fact that it is conull.
\end{proof}

\begin{corollary}
  Given any Lagrangian frame $\F=(\F_1,\F_2)$ on $\PC$, denote by
  $\mathcal{A}_{\F}$ the set of all functions in $C^{\infty}(\PC)$
  which are constant on the leaves of $\F_1$. Then the $C^*$-algebra
  $\mT_{\F}$ generated by the Toeplitz operators with symbols in
  $\mathcal{A}_{\F}$ is commutative on each weighted Bergman space
  $\mathcal{A}^2_m(\PC)$, $m \in \Z_+$. Furthermore, $\mT_{\F}$ is
  unitarily equivalent to $\mT_{(\OO, \OO^\perp)}$.
\end{corollary}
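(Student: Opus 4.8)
The plan is to reduce the entire statement to the separately radial case already handled in Section~\ref{sec:radial}, transporting it by an isometry produced by Theorem~\ref{theo:lagrframes-radialsymbols}. First I would note that a function $a \in C^\infty(\PC)$ is constant on the leaves of $\F_1$ exactly when every level set of $a$ is saturated with respect to $\F_1$; hence $\mathcal{A}_{\F}$ meets hypothesis (1) of Theorem~\ref{theo:lagrframes-radialsymbols} for the frame $(\F_1,\F_2)$, taking its domain $U$ to be conull as required (automatic when $\F$ is given on all of $\PC$). Applying that theorem yields $\varphi \in \Iso_0(\PC)$ with $\mathcal{A}_{\F} \subset \varphi^*(\mathcal{A}_{\A(n)})$, so that $a\circ\varphi^{-1}$ is $\A(n)$-invariant, and thus separately radial by Lemma~\ref{lem:radialgroup}, for every $a \in \mathcal{A}_{\F}$.

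Combining this with Theorem~\ref{thm-Lag-frame-classification} (the leaves of $\varphi(\F_1)$ are leaves of $\OO$) and the density argument already used in the proof of Theorem~\ref{theo:lagrframes-radialsymbols}, I would show that $a \mapsto a\circ\varphi^{-1}$ is a bijection of $\mathcal{A}_{\F}$ onto $\mathcal{A}_{(\OO,\OO^\perp)}$. The forward direction is the $\A(n)$-invariance just recorded, propagated from the conull set $\varphi(U)$ to all of $\PC$ by continuity; the reverse direction sends an $\A(n)$-invariant $b$ to $b\circ\varphi$, which lies in $C^\infty(\PC)$ and is constant on the leaves of $\F_1$ precisely because $\varphi$ carries them onto leaves of $\OO$.

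Next I would upgrade $\varphi$ to a unitary of the quantization spaces. Writing $\varphi = \lambda(A)$ with $A \in \SU(n+1)$ as in Proposition~\ref{prop-Iso-PSU}, the holomorphic isometry $[w]\mapsto[Aw]$ preserves $\omega$, the Fubini--Study metric, the measures $\nu_m$, and lifts canonically to a bundle automorphism of $H^m$. This produces a unitary $U_\varphi$ on $L_2(\PC,H^m)$ preserving $\mathcal{A}^2_m(\PC)$, commuting with the Bergman projection $B_m$, and satisfying $U_\varphi M_a U_\varphi^{-1} = M_{a\circ\varphi^{-1}}$ for multiplication operators (the automorphy factors cancel because $a$ is scalar). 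Hence $U_\varphi T_a U_\varphi^{-1} = U_\varphi B_m M_a B_m U_\varphi^{-1} = B_m M_{a\circ\varphi^{-1}} B_m = T_{a\circ\varphi^{-1}}$ on $\mathcal{A}^2_m(\PC)$.

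The conclusion is then immediate: since $a\circ\varphi^{-1}$ is separately radial, Theorem~\ref{theo:comm-C*-algebra} makes each $T_{a\circ\varphi^{-1}}$ a diagonal operator on $l_2(J_n(m))$, so these commute and $\mT_{(\OO,\OO^\perp)}$ is commutative; and the bijection of symbol spaces established above shows that conjugation by $U_\varphi$ sends the generators of $\mT_{\F}$ exactly onto those of $\mT_{(\OO,\OO^\perp)}$, whence $U_\varphi\,\mT_{\F}\,U_\varphi^{-1} = \mT_{(\OO,\OO^\perp)}$. This gives both the unitary equivalence and, by transport, the commutativity of $\mT_{\F}$. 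The step I expect to demand the most care is the construction and intertwining property of $U_\varphi$, i.e. verifying that an element of $\Iso_0(\PC)$ lifts to a unitary of each weighted Bergman space conjugating Toeplitz operators by precomposition of the symbol; this equivariance of the Berezin--Toeplitz quantization is the one ingredient not already packaged in the preceding sections. A secondary subtlety is ensuring, via the conull density of $\varphi(U)$, that invariance on the domain propagates to genuine $\A(n)$-invariance on all of $\PC$.
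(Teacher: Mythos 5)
Your proposal is correct and takes essentially the route the paper intends: the corollary is stated without proof precisely because it is the combination of Theorem~\ref{theo:lagrframes-radialsymbols} (transport of the symbol class onto the $\A(n)$-invariant, i.e.\ separately radial, symbols by an isometry $\varphi$) with Theorem~\ref{theo:comm-C*-algebra} (simultaneous diagonalization of the resulting Toeplitz operators). Your explicit construction of the intertwining unitary $U_\varphi$ from the lift of $\varphi$ to $\SU(n+1)$ acting on $H^m$, and the verification that $a\mapsto a\circ\varphi^{-1}$ is a bijection of $\mathcal{A}_{\F}$ onto $\mathcal{A}_{(\OO,\OO^\perp)}$, correctly fill in the steps the paper leaves implicit.
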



\begin{thebibliography}{00}

\bibitem{Griffiths-Harris} P.~Griffiths and J.~Harris,
  \textit{Principles of algebraic geometry.} Reprint of the 1978
  original. Wiley Classics Library. John Wiley \& Sons, Inc., New York,
  1994.

\bibitem{GQV-disk} S.~Grudsky, R.~Quiroga-Barranco and N.~Vasilevski,
  \textit{Commutative $C^∗$-algebras of Toeplitz operators and
    quantization on the unit disk.} J. Funct. Anal. 234 (2006), no. 1,
  1–44.

\bibitem{Helgason} S.~Helgason, \textit{Differential geometry, Lie
    groups, and symmetric spaces.} Pure and Applied Mathematics,
  80. Academic Press, Inc., New York-London, 1978.

\bibitem{Knapp} A.W.~Knapp, \textit{Lie groups beyond an
    introduction.} Second edition. Progress in Mathematics,
  140. Birkhäuser Boston, Inc., Boston, MA, 2002.

\bibitem{KNII} S.~Kobayashi and K.~Nomizu, \textit{Foundations of
    differential geometry.} Vol. II. Reprint of the 1969
  original. Wiley Classics Library. A Wiley-Interscience
  Publication. John Wiley \& Sons, Inc., New York, 1996.

\bibitem{Naitoh-Takeuchi} H.~Naitoh and M.~Takeuchi, \textit{Totally
    real submanifolds and symmetric bounded domains.}  Osaka
  J. Math. 19 (1982), no. 4, 717–731.

\bibitem{Ernesto} E.~Prieto Sanabria, \textit{Toeplitz operators on
    the 2-sphere.} Rev. Colombiana Mat. 43 (2009), no. 2, 87–100.

\bibitem{QV-Ball1} R.~Quiroga-Barranco and N.~Vasilevski,
  \textit{Commutative $C^*$-algebras of Toeplitz operators on the unit
    ball. I. Bargmann-type transforms and spectral representations of
    Toeplitz operators.} Integral Equations Operator Theory 59 (2007),
  no. 3, 379–419.

\bibitem{QV-Ball2} R.~Quiroga-Barranco and N.~Vasilevski,
  \textit{Commutative $C^*$-algebras of Toeplitz operators on the unit
    ball. II. Geometry of the level sets of symbols.}  Integral
  Equations Operator Theory 60 (2008), no. 1, 89–132.

\bibitem{QV-Reinhardt} R.~Quiroga-Barranco and N.~Vasilevski,
  \textit{Commutative algebras of Toeplitz operators on the Reinhardt
    domains.} Integral Equations Operator Theory 59 (2007), no. 1,
  67–98.

\bibitem{Schli} M.~Schlichenmaier, \textit{Berezin-Toeplitz
    quantization for compact K\"ahler manifolds.}  A review of
  results. Adv. Math. Phys. 2010, Art. ID 927280, 38 pp.

\bibitem{vasilev-IE} N.~Vasilevski, \textit{Toeplitz operators on the
    Bergman spaces: inside-the-domain effects.} Second Summer School
  in Analysis and Mathematical Physics (Cuernavaca, 2000), 79–146,
  Contemp. Math., 289, Amer. Math. Soc., Providence, RI, 2001.

\bibitem{Nikolai-book} N.~Vasilevski, \textit{Commutative algebras of
    Toeplitz operators on the Bergman space.}  Operator Theory:
  Advances and Applications, 185. Birkh\"auser Verlag, Basel, 2008.

\end{thebibliography}
\end{document}